\title{Variations on known and recent cardinality bounds}
\date{}
\author{F.A. Basile\footnote{University of Messina}, M. Bonanzinga\footnote{University of Messina}, N. Carlson\footnote{California Lutheran University}}
\begin{document}
\maketitle

\newtheorem {theorem}{Theorem}[section]
\newtheorem {lemma}{Lemma}[section]
\newtheorem {cor}{Corollary}[section]
\newtheorem {defin}{Definition}[section]
\newtheorem{remark}{Remark}[section]
\newtheorem{claim}{Claim}[section]
\newtheorem{es}{Example}[section]
\newtheorem{quest}{Question}[section]
\newtheorem{prop}{Proposition}[section]
\newcommand{\my}[1]{\textcolor{red}{\sf #1}}

\begin{abstract}
Sapirovskii \cite{SAP} proved that  $|X|\leq\pi\chi(X)^{c(X)\psi(X)}$, for a regular space $X$. We introduce the
$\theta$-\emph{pseudocharacter} of a Urysohn space $X$, denoted by $\psi_\theta (X)$, and prove that the previous inequality holds for Urysohn spaces replacing 
the bounds on celluarity $c(X)\leq\kappa$ and on pseudocharacter $\psi(X)\leq\kappa$ with a bound on Urysohn cellularity
$Uc(X)\leq\kappa$ (which is a weaker conditon because $Uc(X)\leq c(X)$) and on $\theta$-pseudocharacter $\psi_\theta (X)\leq\kappa$ respectivly (Note that in general $\psi(\cdot)\leq\psi_\theta (\cdot)$ and in the class of regular spaces $\psi(\cdot)=\psi_\theta(\cdot)$).
Further, in \cite{BSS} the authors generalized the Dissanayake and Willard's inequality:
$|X|\leq 2^{aL_{c}(X)\chi(X)}$, for Hausdorff spaces $X$ \cite{WD}, in the class of $n$-Hausdorff spaces and de Groot's result: $|X|\leq 2^{hL(X)}$, for Hausdorff spaces \cite{dG}, in the class of $T_1$ spaces (see Theorems 2.22 and 2.23 in \cite{BSS}). In this paper we restate  Theorem 2.22  in \cite{BSS} in the class of $n$-Urysohn spaces and give a variation of
Theorem 2.23  in \cite{BSS} using new cardinal functions, denoted by $UW(X)$, $\psi w_\theta(X)$, $\theta\hbox{-}aL(X)$, $h\theta\hbox{-}aL(X)$, $\theta\hbox{-}aL_c(X)$ and $\theta\hbox{-}aL_{\theta}(X)$.
In \cite{BCCS} the authors introduced the \emph{Hausdorff point separating weight of a space} $X$ denoted by $Hpsw(X)$ and 
proved a Hausdorff version of Charlesworth's inequality $|X|\leq psw(X)^{L(X)\psi(X)}$ \cite{C}. In this paper, we introduce the \emph{Urysohn point separating weight of a space} $X$, denoted by $Upsw(X)$, and prove that $|X|\leq Upsw(X)^{\theta\hbox{-}aL_{c}(X)\psi(X)}$, for a Urysohn space $X$.

\end{abstract}

{\bf Keywords: }Urysohn; $\theta$-closure; pseudocharacter; almost Lindel\"of degree; Hausdorff point separating weight.

{\bf AMS Subject Classification:} 54A25.

%%%%%%%%%%%%%%%%%%%%%%%%%
\section{Introduction}
%%%%%%%%%%%%%%%%%%%%%%%%%

We shall follow notations from \cite{ENG} and \cite{H}.
Recall that a space $X$ is \textit{Urysohn} if for every two distinct points $x,\;y\in X$ there are open sets $U$ and $V$ such that $x\in U$, $y\in V$ and $\overline{U}\cap\overline{V}=\emptyset$. 

For a space $X$, we denote by $\chi(X)$ (resp., $\psi(X)$, $\pi\chi(X), c(X), t(X))$ the \emph{character}, (resp., \emph{pseudocharacter}, $\pi$\emph{-character}, \emph{celluarity}, \emph{tightness}) of a space $X$ \cite{ENG}.

The $\theta$\textit{-closure} of a set $A$ in a space $X$ is the set $cl_{\theta}(A)=\{x\in X:$ for every neighborhood $U\ni x , \overline{U}\cap A\neq\emptyset\}$; $A$ is said to be $\theta$-closed if $A = cl_\theta(A)$ \cite{VEL}. Considering the fact that the $\theta$-closure operator is not in general idempotent, Bella and Cammaroto defined in \cite{BC} the $\theta$\textit{-closed hull} of a subset $A$ of a space $X$, denoted by $[A]_{\theta}$, that is the smallest $\theta$-closed subset of $X$ containing $A$.  The $\theta$\textit{-tightness} of $X$ at $x\in X$ is $t_{\theta}(x,X)=\min \{k :$ for every $A\subseteq X$ with $x\in cl_{\theta}(A)$ there exists $B\subseteq A$ such that $|B|\leq k$ and $x\in cl_{\theta}(B)$; the $\theta$\textit{-tightness of} $X$ is $t_{\theta}(X)=sup\{t_{\theta}(x,X):\;x\in X\}$ \cite{CK}. We have that tightness and $\theta$-tightness are independent (see Example 11 and Example 12 in \cite{PCC}), but if $X$ is a regular space then $t(X)=t_{\theta}(X)$. The $\theta$-density of $X$ is $d_{\theta}(X)=min\{k:\;A\subseteq X\;\hbox{, }A\;\hbox{is a dense subset of }X\hbox{and }|A|\leq k\}$. We say that a subset $A$ of $X$ is $\theta$\textit{-dense} in $X$ if $cl_{\theta}(A)=X$.

If $X$ is a Hausdorff space, the \textit{closed pseudocharacter of a point} $x$ in $X$ is $\psi_{c}(x,X)=\min\{|{\cal U}| : {\cal U}$ is a family of open neighborhoods of $x$ and $\left\{x\right\}$ is the intersection of the closure of $\cal U\}$; the \textit{closed pseudocharacter of} $X$ is $\psi_{c}(X)=sup\{\psi_{c}(x,X):\;x\in X\}$ (see \cite{SH} where it is called $S\psi(X)$). The \textit{Urysohn pseudocharacter of} $X$, denoted by $U\psi(X)$, is the smallest cardinal $k$ such that for each point $x\in X$ there is a collection $\{V(\alpha,x):\;\alpha<k\}$ of open neighborhoods of $x$ such that if $x\neq y$, then there exist $\alpha,\;\beta<k$ such that $\overline{V(\alpha,x)}\cap\overline{V(\beta,y)}=\emptyset$ \cite{STA}; this cardinal function is defined only for Urysohn spaces.
The \emph{Urysohn-cellularity} of a space X is $Uc(X) =\ sup\{|{\cal V}| : {\cal V}$ is Urysohn-cellular$\}$ (a collection $\cal V$ of open subsets of $X$ is called \emph{Urysohn-cellular}, if $O_1,\; O_2$ in $\cal V$ and $O_1\neq O_2$ implies
$\overline{O_1}\cap\overline{O_2}=\emptyset$). Of course, $Uc(X) \leq c(X)$.

The \textit{almost Lindel\"of degree} of a subset $Y$ of a space $X$ is $aL(Y,X)=\min \{k :$ for every cover $\mathcal{V}$ of $Y$ consisting of open subsets of $X$, there exists $\mathcal{V'}\subseteq\mathcal{V}$ such that $|\mathcal{V'}|\leq k$ and
$\bigcup\{\overline{V}:\;V\in\mathcal{V'}\}=Y$\}.  The function $aL(X,X)$ is called the \textit{almost Lindel\"of degree} of $X$ and denoted by $aL(X)$ (see \cite{WD} and \cite{Hod}). The \textit{almost  Lindel\"of degree of $X$ with respect to closed subsets of $X$} is $aL_{c}(X)=\sup\{aL(C,X):\;C\subseteq X\;is\;closed\}$.

For a subset A of a space X we will denote by $[A]^{\leq\lambda}$ the
family of all subsets of A of cardinality $\leq\lambda$.

Sapirovskii \cite{SAP} proved that  $|X|\leq\pi\chi(X)^{c(X)\psi(X)}$, for a regular space $X$. Later Shu-Hao \cite{SH} proved that the previous inequality holds in the class of Hausdorff spaces by replacing the pseudocharacter with the closed pseudocharacter. In Section \ref{Section1}  we introduce  the
$\theta$-\emph{pseudocharacter} of a Urysohn space $X$, denoted by $\psi_\theta (X)$ and prove the following result:

$\bullet$ $|X|\leq\pi\chi(X)^{Uc(X)\psi_{\theta}(X)}$ for a Urysohn space $X$.

\noindent A space $X$ is $n$-\textit{Urysohn} \cite{BCM} (resp.  $n$-\textit{Hausdorff} \cite{B}), $n\in \omega$,  if for every $x_{1},x_{2},...,x_{n}\in X$ there exist open subsets $U_{1},U_{2},...,U_{n}$ of $X$ such that $x_{1}\in U_{1},\;x_{2}\in U_{2},..., x_{n}\in U_{n}$ and 
$\bigcap_{i=1}^{n}\overline{U_{i}}=\emptyset$ (resp, $\bigcap_{i=1}^{n}U_{i}=\emptyset$).
In \cite{BSS} the authors generalized the Dissanayake and Willard's inequality:
$|X|\leq 2^{aL_{c}(X)\chi(X)}$, for Hausdorff spaces $X$ \cite{WD}, in the class of $n$-Hausdorff spaces and de Groot's result: $|X|\leq 2^{hL(X)}$, for Hausdorff spaces \cite{dG}, in the class of $T_1$ spaces. In particular, they used two new cardinal functions, denoted by $HW(X)$, $\psi w(X)$, to obtain the following results:

$\bullet$ If $X$ is a $T_{1}$ $n$-Hausdorff ($n\in\omega$) space, then $|X|\leq HW(X)2^{aL_{c}(X)\chi(X)}$.

$\bullet$ If $X$ is a $T_{1}$ space, then $|X|\leq HW(X)\psi w(X)^{haL(X)}$.

\noindent In Section \ref{Section2} we introduce new cardinal functions, denoted by $UW(X)$, $\psi w_\theta(X)$, $\theta\hbox{-}aL(X)$, $h\theta\hbox{-} aL(X)$, $\theta\hbox{-}aL_c(X)$ and $\theta\hbox{-}aL_{\theta}(X)$ such that $HW(X)\leq UW(X)$, $\psi w(X)\leq\psi w_\theta(X)$ and $\theta\hbox{-}aL(X)\leq aL(X)$, restate  Theorem 2.22  in \cite{BSS} in the class of $n$-Urysohn spaces and give a variation of
Theorem 2.23  in \cite{BSS}.
In particular, we prove the following results:

$\bullet$ If $X$ is a $T_{1}$ $n$-Urysohn ($n\in\omega$)  space, then $|X|\leq UW(X)2^{\theta\hbox{-}aL_{\theta}(X)\chi(X)}$.

$\bullet$ If $X$ is a $T_{1}$ space then $|X|\leq UW(X)\psi w_{\theta}(X)^{h\theta\hbox{-}aL(X)}$.

\noindent In \cite{BCCS} the authors introduced the \emph{Hausdorff point separating weight of a space} $X$ denoted by $Hpsw(X)$ and 
proved a Hausdorff version of Charlesworth's inequality $|X|\leq psw(X)^{L(X)\psi(X)}$ \cite{C}. In a similar way, in Section \ref{section3} we introduce \emph{Urysohn point separating weight of a space} $X$, denoted by $Hpsw(X)$, and prove the following result:

$\bullet$ If $X$ is a Urysohn space, then $|X|\leq Upsw(X)^{\theta\hbox{-}aL_{c}(X)\psi(X)}$.

%%%%%%%%%%%%%%%%%%%%%%%%%%%%%%%%%%%%%%%%%%%%%%%%%%%%%%%%%%
\section{A generalization of Sapirovskii's inequality $|X|\leq\pi\chi(X)^{c(X)\psi(X)}$.}\label{Section1}
%%%%%%%%%%%%%%%%%%%%%%%%%%%%%%%%%%%%%%%%%%%%%%%%%%%%%%%%%%

\begin{defin}\label{theta-pseudocharacter}
\textnormal{If $X$ is a Urysohn space, we define $\theta$\textit{-pseudocharacter of a point x}$ \in X$ the smallest cardinal $k$ such that $\left\{x\right\}$ is the intersection of the $\theta$-closure of the closure of a family of open neighborhood of $x$ having cardinality less or equal to $k$; we denote it with $\psi_{\theta}(x,X)$.} \textnormal{The $\theta$\textit{-pseudocharacter of} $X$ is:}
$$\psi_{\theta}(X)=sup\{\psi_{\theta}(x,X):\;x\in X\}.$$
\end{defin}

The following result is trivial:

\begin{prop}\label{prop1}
$X$ is a Urysohn space iff for every $x \in X$, $\{x\}$ is the intersection of the $\theta$-closure of the closure of a family of open neighborood of $x$.
\end{prop}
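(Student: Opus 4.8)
The plan is to prove both implications directly from the definitions of the Urysohn property and of the $\theta$-closure operator; no auxiliary results are needed. Throughout I will use the following reformulation of the defining condition of $cl_{\theta}$: a point $y$ belongs to $cl_{\theta}(A)$ iff every neighborhood $W$ of $y$ satisfies $\overline{W}\cap A\neq\emptyset$; equivalently, $y\notin cl_{\theta}(A)$ iff $y$ has an open neighborhood $V$ with $\overline{V}\cap A=\emptyset$ (pass to the interior of the witnessing neighborhood if the chosen meaning of ``neighborhood'' does not already give an open set).

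For the forward implication, assume $X$ is Urysohn and fix $x\in X$. I would take $\mathcal{U}$ to be the family of \emph{all} open neighborhoods of $x$ and show $\{x\}=\bigcap\{cl_{\theta}(\overline{U}):U\in\mathcal{U}\}$. One inclusion is immediate, since $x\in\overline{U}\subseteq cl_{\theta}(\overline{U})$ for every $U\in\mathcal{U}$. For the other, let $y\neq x$; by the Urysohn property there are open sets $U\ni x$ and $V\ni y$ with $\overline{U}\cap\overline{V}=\emptyset$. Then $U\in\mathcal{U}$, and $V$ is an open neighborhood of $y$ with $\overline{V}\cap\overline{U}=\emptyset$, so $y\notin cl_{\theta}(\overline{U})$ and hence $y\notin\bigcap\{cl_{\theta}(\overline{U}):U\in\mathcal{U}\}$.

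For the converse, assume that for every $x\in X$ there is a family $\mathcal{U}_{x}$ of open neighborhoods of $x$ with $\bigcap\{cl_{\theta}(\overline{U}):U\in\mathcal{U}_{x}\}=\{x\}$, and let $x\neq y$ in $X$. Since $y\notin\{x\}$, there is some $U\in\mathcal{U}_{x}$ with $y\notin cl_{\theta}(\overline{U})$; by the reformulation above this yields an open neighborhood $V$ of $y$ with $\overline{V}\cap\overline{U}=\emptyset$. Since $U$ is an open neighborhood of $x$, the pair $U,V$ witnesses the Urysohn property at $x,y$.

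There is essentially no obstacle here, which is why the authors call the statement trivial. The only two points that deserve a moment's care are that the $\theta$-closure must be applied to the \emph{closure} $\overline{U}$ of each chosen neighborhood rather than to $U$ itself (so that the Urysohn separation of closures matches the definition of $cl_{\theta}$), and that in the converse direction one should extract an \emph{open} neighborhood of $y$ from the failure of the $\theta$-closure condition in order to match the formulation of ``Urysohn'' fixed in the Introduction.
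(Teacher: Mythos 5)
Your proof is correct and follows essentially the same route as the paper's: both directions are just a direct unwinding of the definitions of the Urysohn property and of $cl_{\theta}$, with the only cosmetic difference that you take the family of all open neighborhoods of $x$ while the paper picks one witness $U_{y}$ per point $y\neq x$. Nothing further is needed.
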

\proof

\noindent Let $X$ be a Urysohn space and $x\in X$. For every $y\in X\setminus\{x\}$, there exist $U_{y}$ and $V_{y}$ open disjoint subsets of $X$ such that $x\in U_{y}$, $y\in V_{y}$ and $\overline{U_{y}}\cap\overline{V_{y}}=\emptyset$. So, y $\notin cl_{\theta}(\overline{U_{y}})$ and $\{x\}=\bigcap_{y\in X\setminus\{x\}}cl_{\theta}(\overline{U_{y}})$. Viceversa let $x,\;y$ be distinct points of $X$. By hypothesis there exists an open neighbourhood $V$ of $x$ such that $y\notin cl_{\theta}(\overline{V})$.Then there exists an open subset $U$ of $X$ such that $y\in U$ and $\overline{U}\cap \overline{V}=\emptyset$. So $X$ is Urysohn.
\endproof

We have that:
$$\psi(X)\leq\psi_{c}(X)\leq\psi_{\theta}(X)\leq U\psi(X)\leq\chi(X).$$

Since for a regular space $X$, $cl_{\theta}(A)=\overline{A}$ for every $A\subseteq X$ \cite{Ency}, we have that for a regular space $X$, $\psi_{c}(X)=\psi_{\theta}(X)$. In general this need not be true for non regular spaces. Indeed if we consider $\mathbb{R}$ with the countable complement topology we have that $\overline{\mathbb{Q}}\neq cl_{\theta}(\mathbb{Q})$.
 
\begin{quest} \rm
Is there a Urysohn space such that $\psi_{c}(X)<\psi_{\theta}(X)$?
\end{quest}

It was proved in \cite{BC} that for Urysohn spaces, $|cl_{\theta}(A)|\leq |A|^{\chi(X)}$ for every $A\subseteq X$
and further this inequality was used
for the estimation of cardinality of Lindel\"{o}f spaces. Since 
$t_{\theta}(X)\psi_{\theta}(X)\leq\chi(X)$, the following proposition improves the result in \cite{BC}. (Note that if $X=\omega\cup \{p\}$, with $p\in\omega^*$,
% is the space with the single ultrafilter topology
we have that $\aleph_{0}=t_{\theta}(X)\psi_{\theta}(X)<\chi(X)$.)

\begin{prop}\label{P}
Let $X$ be a Urysohn space such that $t_{\theta}(X)\psi_{\theta}(X)\leq k$. Then for every $A\subseteq X$ we have that $|cl_{\theta}(A)|\leq |A|^{k}$.
\end{prop}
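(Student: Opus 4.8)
The plan is to mimic the classical Arhangel'skii-style closing-off argument used to bound $|cl_\theta(A)|$, but running the induction so that at each stage we only need to add sets of size $\le |A|^k$ and we stay inside a $\theta$-closed (or at least $t_\theta$-controlled) set. Write $\lambda=|A|$ and assume $\lambda\ge 2$ (the finite case is trivial). First I would fix, for each point $x\in X$, a family $\mathcal{V}(x)=\{V(\alpha,x):\alpha<k\}$ of open neighbourhoods of $x$ witnessing $\psi_\theta(x,X)\le k$, i.e. $\{x\}=\bigcap_{\alpha<k}cl_\theta(\overline{V(\alpha,x)})$; such families exist by Definition~\ref{theta-pseudocharacter} (and Proposition~\ref{prop1}).

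Next I would build an increasing chain $\{A_\eta:\eta<k^+\}$ of subsets of $X$ with $A_0=A$, $|A_\eta|\le\lambda^k$, and $A_\eta=\bigcup_{\xi<\eta}A_\xi$ at limits. At a successor stage $\eta+1$: for every $B\in[A_\eta]^{\le k}$ and every point $x$ lying in $cl_\theta(B)$ — there are at most $(\lambda^k)^k=\lambda^k$ such pairs, and for each we may choose one such $x$ — we throw into $A_{\eta+1}$ one such representative $x$; simultaneously, to handle the neighbourhood traces, for every $x\in A_\eta$ and every finite (or $\le k$) choice of indices we pick, whenever possible, witnesses showing that certain $\theta$-closures of the $\overline{V(\alpha,x)}$ fail to cover, and add those points too. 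The bookkeeping is arranged so that $|A_{\eta+1}|\le\lambda^k$. Let $Y=\bigcup_{\eta<k^+}A_\eta$; then $|Y|\le k^+\cdot\lambda^k=\lambda^k$ since $k^+\le 2^k\le\lambda^k$.

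The two things to verify are: (i) $cl_\theta(A)\subseteq Y$, and (ii) $Y$ is "$\theta$-closed enough", namely $cl_\theta(Y)=Y$, so that no point of $cl_\theta(A)$ escapes. For (ii), suppose $z\in cl_\theta(Y)$. Since $t_\theta(X)\le k$, there is $B\subseteq Y$ with $|B|\le k$ and $z\in cl_\theta(B)$; by the cofinality-$k^+$ construction, $B\subseteq A_\eta$ for some $\eta<k^+$, and then at stage $\eta+1$ we put a point of $cl_\theta(B)$ into $A_{\eta+1}\subseteq Y$ — but this only gives \emph{some} representative, not necessarily $z$. To upgrade "some representative" to "$z$ itself" is exactly where $\psi_\theta\le k$ enters: using the neighbourhood families $\mathcal{V}(\cdot)$ of the representatives already in $Y$, one shows that if $z\notin Y$ then $z$ could have been separated at some stage from all candidates, contradicting $z\in cl_\theta(B)$. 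This step — threading the $\theta$-tightness witness $B$ together with the $\theta$-pseudocharacter witnesses at the representatives to force $z\in Y$, while keeping every stage of size $\le\lambda^k$ — is the main obstacle; everything else is routine cardinal arithmetic and the observation that $cl_\theta$ respects the chain. Once (ii) holds, (i) follows because $A=A_0\subseteq Y=cl_\theta(Y)\supseteq cl_\theta(A)$, whence $|cl_\theta(A)|\le|Y|\le\lambda^k=|A|^k$.
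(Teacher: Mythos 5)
There is a genuine gap here, and you flag it yourself: the step that upgrades ``some representative of $cl_{\theta}(B)$ was put into $Y$'' to ``$z$ itself lies in $Y$'' is the heart of the matter, and your sketch does not supply it. The difficulty is structural, not just bookkeeping. In a closing-off construction you face a dilemma: if at stage $\eta+1$ you add \emph{all} points of $cl_{\theta}(B)$ for each $B\in[A_{\eta}]^{\leq k}$, then to keep $|A_{\eta+1}|\leq\lambda^{k}$ you already need a bound on $|cl_{\theta}(B)|$ for small $B$, which is (an instance of) the proposition being proved --- indeed your count of ``at most $(\lambda^{k})^{k}$ pairs $(B,x)$ with $x\in cl_{\theta}(B)$'' silently assumes such a bound; if instead you add only one representative per $B$, then $Y$ need not be $\theta$-closed and need not contain $cl_{\theta}(A)$, and the proposed fix does not work as stated: the $\psi_{\theta}$-witnesses $\mathcal{V}(x)$ attached to points $x$ already in $Y$ say nothing about neighbourhoods of the escaping point $z$, and from $z\in cl_{\theta}(B)$ with $B\subseteq Y$ and $z\notin Y$ no contradiction follows. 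So the argument, as written, does not close, and it is not clear it can be closed without effectively redoing the direct argument below.

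The intended proof is much shorter and needs no transfinite induction: the hypotheses are used at the single point being counted, not along an accumulating chain. Fix $x\in cl_{\theta}(A)$ and, by $\psi_{\theta}(X)\leq k$, open neighbourhoods $\{U_{\alpha}(x)\}_{\alpha<k}$ of $x$ with $\{x\}=\bigcap_{\alpha<k}cl_{\theta}(\overline{U_{\alpha}(x)})$. For each $\alpha$ and each neighbourhood $U$ of $x$, since $U\cap U_{\alpha}(x)$ is a neighbourhood of $x$ and $x\in cl_{\theta}(A)$ we get $\emptyset\neq\overline{U\cap U_{\alpha}(x)}\cap A\subseteq\overline{U}\cap\bigl(\overline{U_{\alpha}(x)}\cap A\bigr)$, hence $x\in cl_{\theta}\bigl(\overline{U_{\alpha}(x)}\cap A\bigr)$. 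By $t_{\theta}(X)\leq k$ choose $A_{\alpha}\subseteq\overline{U_{\alpha}(x)}\cap A$ with $|A_{\alpha}|\leq k$ and $x\in cl_{\theta}(A_{\alpha})$. Since $cl_{\theta}(A_{\alpha})\subseteq cl_{\theta}(\overline{U_{\alpha}(x)})$, we get $\{x\}=\bigcap_{\alpha<k}cl_{\theta}(A_{\alpha})$, so the code $\{A_{\alpha}\}_{\alpha<k}\in[[A]^{\leq k}]^{\leq k}$ determines $x$ uniquely, and therefore $|cl_{\theta}(A)|\leq|[[A]^{\leq k}]^{\leq k}|=|A|^{k}$. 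If you want to salvage your outline, this coding is exactly the missing ingredient; once you have it, the chain construction becomes superfluous.
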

\proof
Let $x\in cl_{\theta}(A)$, since $\psi_{\theta}(X)\leq k$ there exist a family $\{U_{\alpha}(x)\}_{\alpha<k}$ of neighborhood of $x$ such that $\{x\}=\bigcap_{\alpha<k}cl_{\theta}(\overline{U_{\alpha}(x)})$. We want to prove that $x\in cl_{\theta}(\overline{U_{\alpha}(x)}\cap A)$, $\forall \alpha<k$. Let $U$ be a neighborhood of $x$ and $\alpha < k$. Then 
$\emptyset\neq \overline{U\cap U_{\alpha}(x)}\cap A \subseteq \overline{U}\cap \overline{U_{\alpha}(x)}\cap A$. This shows that $x\in cl_{\theta}(\overline{U_{\alpha}(x)}\cap A)$. Since  $t_{\theta}(X)\leq k$, there exists $A_{\alpha}\subset \overline{U_{\alpha}(x)}\cap A$ such that $|A_{\alpha}|\leq k$ and $x\in cl_{\theta}(A_{\alpha})$. Then $\{x\}=\bigcap_{\alpha<k}cl_{\theta}(A_{\alpha})$ and $\{A_{\alpha}\}_{\alpha<k}\in[[A]^{\leq k}]^{\leq k}$, so $|cl_{\theta}(A)|\leq |[[A]^{\leq k}]^{\leq k}|=|A|^{k}$.
\endproof

\begin{cor}\cite{BC}
If $X$ is a Urysohn space then for every $A\subseteq X$ we have that $|cl_{\theta}(A)|\leq |A|^{\chi(X)}$.
\end{cor}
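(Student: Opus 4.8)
The plan is to derive this corollary directly from Proposition \ref{P}. That proposition asserts $|cl_{\theta}(A)|\leq |A|^{k}$ whenever $t_{\theta}(X)\psi_{\theta}(X)\leq k$, so it suffices to verify that an arbitrary Urysohn space satisfies $t_{\theta}(X)\psi_{\theta}(X)\leq\chi(X)$; applying Proposition \ref{P} with $k=\chi(X)$ then yields the claim for every $A\subseteq X$.

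For the factor $\psi_{\theta}(X)\leq\chi(X)$ I would simply invoke the inequality chain $\psi(X)\leq\psi_{c}(X)\leq\psi_{\theta}(X)\leq U\psi(X)\leq\chi(X)$ recorded above. If one prefers a direct argument: fix $x$, take a neighbourhood base $\{U_{\alpha}:\alpha<\chi(x,X)\}$ at $x$; for $y\neq x$ use the Urysohn property to separate $x$ and $y$ by open sets with disjoint closures, and shrink the one around $x$ to some basic $U_{\alpha}$, so that $y\notin cl_{\theta}(\overline{U_{\alpha}})$. Hence $\{x\}=\bigcap_{\alpha}cl_{\theta}(\overline{U_{\alpha}})$ and $\psi_{\theta}(x,X)\leq\chi(x,X)$.

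The one point deserving an explicit line is $t_{\theta}(X)\leq\chi(X)$: this does \emph{not} follow from the familiar $t(X)\leq\chi(X)$, since $t$ and $t_{\theta}$ are in general incomparable (as the excerpt notes), so I would argue it directly. Given $x\in cl_{\theta}(A)$, fix a neighbourhood base $\{U_{\alpha}:\alpha<\chi(x,X)\}$ at $x$; for each $\alpha$ we have $\overline{U_{\alpha}}\cap A\neq\emptyset$, so pick $a_{\alpha}\in\overline{U_{\alpha}}\cap A$ and set $B=\{a_{\alpha}:\alpha<\chi(x,X)\}$. Then $|B|\leq\chi(X)$ and $x\in cl_{\theta}(B)$, because any neighbourhood $U$ of $x$ contains some basic $U_{\alpha}$, whence $a_{\alpha}\in\overline{U_{\alpha}}\cap B\subseteq\overline{U}\cap B$.

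Combining the two estimates gives $t_{\theta}(X)\psi_{\theta}(X)\leq\chi(X)\cdot\chi(X)=\chi(X)$, and Proposition \ref{P} with $k=\chi(X)$ delivers $|cl_{\theta}(A)|\leq|A|^{\chi(X)}$. I do not expect any serious obstacle here; the only subtlety is to avoid conflating $\theta$-tightness with tightness when bounding $t_{\theta}(X)$ by the character.
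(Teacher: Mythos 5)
Your proposal is correct and is exactly the route the paper intends: the corollary is obtained by applying Proposition \ref{P} with $k=\chi(X)$, using the inequality $t_{\theta}(X)\psi_{\theta}(X)\leq\chi(X)$ stated just before that proposition. Your explicit verifications of $\psi_{\theta}(X)\leq\chi(X)$ and $t_{\theta}(X)\leq\chi(X)$ (the latter argued directly rather than via $t(X)$) correctly fill in details the paper leaves unproved.
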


The following result is the analogue of 2.20 in \cite{J} in the case of Urysohn spaces.
 
\begin{cor}\label{C}
If $X$ is a Urysohn space then $|X|\leq d_{\theta}(X)^{t_{\theta}(X)\psi_{\theta}(X)}$.
\end{cor}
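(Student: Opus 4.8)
Plan for proving $|X|\leq d_{\theta}(X)^{t_{\theta}(X)\psi_{\theta}(X)}$.

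The plan is to run the standard closing-off / elementary-submodel-style construction, but with the $\theta$-closure operator replacing ordinary closure throughout. Set $k=t_{\theta}(X)\psi_{\theta}(X)$ and fix a $\theta$-dense set $D\subseteq X$ with $|D|\leq d_{\theta}(X)$; we may as well write $\lambda=d_{\theta}(X)$, so $|D|\leq\lambda$. First I would observe that by Proposition \ref{P} the $\theta$-closure of any set of size $\leq\lambda^{k}$ again has size $\leq(\lambda^k)^k=\lambda^k$, so $\theta$-closed "small" sets are closed under the operation $A\mapsto cl_\theta(A)$ up to cardinality $\lambda^k$. For each point $x\in X$ fix, using $\psi_\theta(X)\leq k$, a family $\{U_\alpha(x):\alpha<k\}$ of open neighborhoods of $x$ with $\{x\}=\bigcap_{\alpha<k}cl_\theta(\overline{U_\alpha(x)})$.

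Next I would build an increasing $\omega_1$-type (in fact length-$\lambda^+$ is unnecessary; length $\omega_1$ suffices if $k\geq\omega$, but to be safe one takes a chain of length $k^+$ or just iterates $\omega$ many times) chain of subsets $\{A_\xi\}$ of $X$, each of size $\leq\lambda^k$, starting from $A_0=D$, such that: (i) each $A_\xi$ is a union of the previous ones at limits; (ii) $A_{\xi+1}\supseteq cl_\theta(A_\xi)$; and (iii) — the genuinely new ingredient — whenever $F\subseteq A_\xi$ has $|F|\leq k$ and the "$\theta$-neighborhood filter traces" determined by the $U_\alpha$'s over points already chosen fail to pin down a point inside $cl_\theta(A_\xi)$, we add a witnessing point. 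More concretely, for every countable-to-$k$-sized subfamily of the $U_\alpha(y)$'s with $y\in A_\xi$, if $X\setminus\bigcap cl_\theta(\overline{U_{\alpha_i}(y_i)})$ is covered in a suitable sense, we throw in enough points of $X$ to witness nonemptiness of the relevant $\theta$-closures. Let $Y=\bigcup_\xi A_\xi$; then $|Y|\leq\lambda^k$, $Y$ is $\theta$-closed (by (i),(ii)), and $Y\supseteq D$ so $Y$ is $\theta$-dense, hence $Y=cl_\theta(Y)=X$.

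The crux — and the step I expect to be the main obstacle — is arranging clause (iii) so that the closing-off actually forces $Y=X$, and checking that only $\lambda^k$ many points get added. The mechanism should mirror Corollary \ref{C}'s regular-space prototype (Juhász 2.20): suppose $z\in X\setminus Y$; for each $y\in Y$ pick $\alpha(y)<k$ with $z\notin cl_\theta(\overline{U_{\alpha(y)}(y)})$, i.e. there is an open $W_y\ni z$ with $\overline{W_y}\cap\overline{U_{\alpha(y)}(y)}=\emptyset$. Using $t_\theta(X)\leq k$ applied to the point $z$ and the set $Y$ (note $z\in cl_\theta(Y)$ since $Y=X$... — here one must instead argue that if $Y\neq X$ one derives a contradiction with $\theta$-density, by showing the complement of $Y$ could have been "captured" by $k$-sized traces already present in some $A_\xi$). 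The bookkeeping that makes this work is: enumerate all $k$-sized subsets $F$ of $A_\xi$; for each $F$, the set $\bigcap_{y\in F}cl_\theta(\overline{U_{\alpha}(y)})$ (ranging over choice functions $\alpha$) is either a singleton from $X$ or we add one point from each such intersection when nonempty — there are at most $(\lambda^k)^k\cdot k^k=\lambda^k$ such intersections, so at most $\lambda^k$ points added per stage, and $\lambda^k\cdot\lambda^k=\lambda^k$ over all stages. Then a final argument shows any $z\notin Y$ would contradict that $\{z\}$ is the intersection of its own $\theta$-closures of neighborhoods: the $k$-sized trace pinning $z$ down, transported via $t_\theta$ into $Y$, would already have forced $z\in Y$. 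I would present this carefully as it is exactly where the interplay of $t_\theta$, $\psi_\theta$, and non-idempotence of $cl_\theta$ must be handled — in particular one repeatedly uses Proposition \ref{P} to keep all intermediate $\theta$-closures of size $\leq\lambda^k$.
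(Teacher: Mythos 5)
You have missed that this corollary is an immediate, one-line consequence of Proposition \ref{P}, which is exactly how the paper proves it: by the definition of $d_{\theta}(X)$ there is a $\theta$-dense set $A\subseteq X$ with $|A|\leq d_{\theta}(X)$, and $\theta$-dense means precisely $cl_{\theta}(A)=X$, so a single application of Proposition \ref{P} gives $|X|=|cl_{\theta}(A)|\leq |A|^{t_{\theta}(X)\psi_{\theta}(X)}\leq d_{\theta}(X)^{t_{\theta}(X)\psi_{\theta}(X)}$. The non-idempotence of $cl_{\theta}$, which drives your worry about iterating, is irrelevant here: one application of the operator to $A$ already exhausts $X$. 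The only place where $t_{\theta}$ and $\psi_{\theta}$ do any work is inside Proposition \ref{P} itself, which you quote (to bound sizes of intermediate closures) but never apply to the $\theta$-dense set, which is the whole point.

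As written your proposal is not a proof but a plan with an acknowledged hole exactly where the work would be: clause (iii) of your construction and the final contradiction argument are never specified or verified, and your sketch of that argument is circular --- you invoke $z\in cl_{\theta}(Y)$ ``since $Y=X$'', which is what was to be shown, and then note that one ``must instead argue'' otherwise without doing so. The machinery is also redundant on its own terms: once $A_{0}\supseteq D$ and $A_{\xi+1}\supseteq cl_{\theta}(A_{\xi})$, you already have $Y\supseteq cl_{\theta}(D)=X$ after the first step, so the chain of length $k^{+}$, the closing-off clause, and the appeal to $t_{\theta}$ to get $\theta$-closedness of $Y$ accomplish nothing that the single inclusion $cl_{\theta}(D)=X$ does not. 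In short, the missing idea is simply to apply Proposition \ref{P} to a $\theta$-dense set; the closing-off construction is the wrong tool for this statement (it is the pattern of Theorem \ref{theta-theorem}, not of this corollary).
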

\proof
If $A$ is $\theta$-dense subset of $X$, i.e. $cl_{\theta}(A)=X$, we have that $|A|\leq d_{\theta}(X)$ and from the above theorem we have that $|cl_{\theta}(A)|\leq |A|^{t_{\theta}(X)\psi_{\theta}(X)}$, so $|X|\leq d_{\theta}(X)^{t_{\theta}(X)\psi_{\theta}(X)}$.
\endproof

\noindent The authors know that I. Gotchev obtained independently the results given in Proposition \ref{P} and Corollary \ref{C}.

Now we prove the following result:

\begin{lemma}\label{lemma}

Let $X$ be a topological space, $\mathcal{B}$ a $\pi$-base for $X$ and $\mathcal{W}$ a family of open sets.
Let  $\mathcal{M}$ be a maximal Urysohn cellular subfamily  of $\{U\in\mathcal{B}:\;U\subseteq W\;for\;some\;W\in\mathcal{W}\}$.
Then $cl_{\theta}\left(\bigcup\overline{\mathcal{M}}\right)\supseteq\bigcup\;\mathcal{W}$.
\end{lemma}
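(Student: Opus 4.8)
The plan is to show directly that every point of $\bigcup\mathcal{W}$ lies in $cl_\theta\bigl(\bigcup\overline{\mathcal{M}}\bigr)$, where $\bigcup\overline{\mathcal{M}}$ denotes $\bigcup_{M\in\mathcal{M}}\overline{M}$. Fix $x\in\bigcup\mathcal{W}$, say $x\in W$ with $W\in\mathcal{W}$, and let $U$ be an arbitrary open neighbourhood of $x$; by the definition of the $\theta$-closure it suffices to exhibit a point in $\overline{U}\cap\bigcup_{M\in\mathcal{M}}\overline{M}$.

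First I would shrink $U$: replacing $U$ by $U\cap W$ we may assume $U\subseteq W$, and since $\mathcal{B}$ is a $\pi$-base for $X$ there is a nonempty $B\in\mathcal{B}$ with $B\subseteq U\subseteq W$. Hence $B$ belongs to the family $\{V\in\mathcal{B}:V\subseteq W'\ \text{for some}\ W'\in\mathcal{W}\}$ over which $\mathcal{M}$ is, by hypothesis, a \emph{maximal} Urysohn cellular subfamily.

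Now I would split into two cases according to this maximality. If $B\in\mathcal{M}$, then $\emptyset\neq\overline{B}\subseteq\overline{U}\cap\bigcup_{M\in\mathcal{M}}\overline{M}$, and the step is done. If $B\notin\mathcal{M}$, then $\mathcal{M}\cup\{B\}$ fails to be Urysohn cellular; since $\mathcal{M}$ itself is Urysohn cellular, the offending pair must involve $B$, so there is some $M\in\mathcal{M}$ with $\overline{B}\cap\overline{M}\neq\emptyset$, and any point of this intersection lies in $\overline{U}\cap\bigcup_{M\in\mathcal{M}}\overline{M}$. In either case $\overline{U}$ meets $\bigcup\overline{\mathcal{M}}$; as $U$ was an arbitrary neighbourhood of $x$ this yields $x\in cl_\theta\bigl(\bigcup\overline{\mathcal{M}}\bigr)$, and since $x\in\bigcup\mathcal{W}$ was arbitrary the desired inclusion $cl_\theta\bigl(\bigcup\overline{\mathcal{M}}\bigr)\supseteq\bigcup\mathcal{W}$ follows.

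There is essentially no hard part; the only points needing a little care are (i) legitimately replacing $U$ by a $\pi$-base element contained in $W$ — this is exactly where the weaker hypothesis ``$\pi$-base'' (rather than ``base'') suffices — and (ii) the bookkeeping in the maximality argument, i.e.\ observing that a maximal Urysohn cellular subfamily cannot be properly enlarged by $B$ precisely because either $B\in\mathcal{M}$ already or $\overline{B}$ meets $\overline{M}$ for some $M\in\mathcal{M}$.
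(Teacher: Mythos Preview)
Your proof is correct and is essentially the contrapositive of the paper's argument: the paper assumes $x\notin cl_\theta(\bigcup\overline{\mathcal{M}})$, obtains an open $U\ni x$ with $\overline{U}\cap\overline{M}=\emptyset$ for all $M\in\mathcal{M}$, picks $B\in\mathcal{B}$ with $B\subseteq U\cap W$, and notes that $\mathcal{M}\cup\{B\}$ properly extends $\mathcal{M}$, contradicting maximality. The only cosmetic difference is that you treat the case $B\in\mathcal{M}$ separately, whereas in the paper's contradiction framing that case is automatically excluded (since $B\subseteq U$ and $\overline{U}$ misses every $\overline{M}$).
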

\begin{proof}
Using Zorn's Lemma we can say that there exists a maximal Urysohn-cellular subfamily $\mathcal{M}$ of $\{U\in\mathcal{B}:\;U\subseteq W\;for\;some\;W\in\mathcal{W}\}$.
We want to prove that $cl_{\theta}\left(\bigcup\;\overline{\mathcal{M}}\right)\supseteq \bigcup\;\mathcal{W}$. 
Assume, by the way of contradiction, that $cl_{\theta}\left(\bigcup\;\overline{\mathcal{M}}\right) \not\supset \bigcup\;\mathcal{W}$. Let $x\in \bigcup\mathcal{W}$ such that $x\notin cl_{\theta}(\bigcup\;\overline{\mathcal{M}})$. Then there exists an open set $U$ such that $x\in U$ such that $\overline{U}\cap \overline{M}=\emptyset$, $\forall M\in\mathcal{M}$. 
So $x\notin M$, $\forall M\in\mathcal{M}$. Let $W\in\mathcal{W}$ such that $x\in W$. $\mathcal{M}\cup \{U\cap W\}$ is a Urysohn cellular family.
%nessun elemento di M interseca la chiusura di U e quindi in particolare non interseca quella intersezione%
Since $\mathcal{B}$ is a $\pi$-base for $X$ and $U\cap W$ is an open set containing $x$, there exists $B\in\mathcal{B}$ such that $B\subseteq U\cap W$, so $\mathcal{M'}=\mathcal{M}\cup \{B\}$ is a Urysohn cellular subfamily of $\{U\in\mathcal{B}:\;U\subseteq W\;for\;some\;W\in\mathcal{W}\}$ containing $\mathcal{M}$; a contradiction.
\end{proof}

\begin{theorem}\label{theta-theorem}
Let $X$ be a Urysohn space. Then $|X|\leq\pi\chi(X)^{Uc(X)\psi_{\theta}(X)}$.
\end{theorem}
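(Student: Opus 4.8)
\section{A plan for the proof of Theorem \ref{theta-theorem}}

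The plan is to run Sapirovskii's closing-off argument for $|X|\leq\pi\chi(X)^{c(X)\psi(X)}$ — in the form Shu-Hao used for Hausdorff spaces with the closed pseudocharacter — but with ordinary closure replaced throughout by $cl_\theta$, cellular families replaced by Urysohn-cellular families, and $\psi_c$ replaced by $\psi_\theta$; Lemma \ref{lemma} will play the role of the classical maximal-cellular-family lemma.

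Write $\kappa=Uc(X)\psi_\theta(X)$ and $\mu=\pi\chi(X)$; the goal is $|X|\leq\mu^{\kappa}$. For each $x\in X$ fix a local $\pi$-base $\mathcal{P}(x)$ with $|\mathcal{P}(x)|\leq\mu$, and, using Definition \ref{theta-pseudocharacter} and $\psi_\theta(X)\leq\kappa$, a family $\{V(x,\xi):\xi<\kappa\}$ of open neighbourhoods of $x$ with $\bigcap_{\xi<\kappa}cl_\theta(\overline{V(x,\xi)})=\{x\}$. Then I would build an increasing continuous chain $\langle H_\alpha:\alpha<\kappa^{+}\rangle$ of subsets of $X$, with $H_0\neq\emptyset$ and $|H_\alpha|\leq\mu^{\kappa}$, such that, setting $\mathcal{B}_\alpha=\bigcup_{x\in H_\alpha}\mathcal{P}(x)$, the set $H_{\alpha+1}$ meets $X\setminus cl_\theta(\bigcup\overline{\mathcal{M}})$ for every Urysohn-cellular $\mathcal{M}\subseteq\mathcal{B}_\alpha$ with $cl_\theta(\bigcup\overline{\mathcal{M}})\neq X$. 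Since a Urysohn-cellular family has size at most $Uc(X)\leq\kappa$ and $|\mathcal{B}_\alpha|\leq\mu^{\kappa}$, there are at most $(\mu^{\kappa})^{\kappa}=\mu^{\kappa}$ such $\mathcal{M}$ at stage $\alpha$, so $|H_{\alpha+1}|\leq\mu^{\kappa}$. Put $H=\bigcup_{\alpha<\kappa^{+}}H_\alpha$, so $|H|\leq\mu^{\kappa}$; as $\kappa^{+}$ is regular, any Urysohn-cellular $\mathcal{M}\subseteq\bigcup_{x\in H}\mathcal{P}(x)$ already lies in some $\mathcal{B}_\alpha$, so the displayed property is inherited by $H$.

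It then remains to show $H=X$, which finishes the proof. Suppose $p\in X\setminus H$. For each $y\in H$ choose $\xi(y)<\kappa$ with $p\notin cl_\theta(\overline{V(y,\xi(y))})$, put $W_y=V(y,\xi(y))$, and let $\mathcal{W}=\{W_y:y\in H\}$, an open cover of $H$. Let $\mathcal{M}$ be a maximal Urysohn-cellular subfamily of $\{U\in\bigcup_{x\in H}\mathcal{P}(x):U\subseteq W\text{ for some }W\in\mathcal{W}\}$; then $|\mathcal{M}|\leq Uc(X)\leq\kappa$, so $\mathcal{M}\subseteq\mathcal{B}_\alpha$ for some $\alpha<\kappa^{+}$, and the argument of Lemma \ref{lemma}, applied to the points of $H$ (where the $\mathcal{P}(x)$ genuinely are local $\pi$-bases), gives $H\subseteq\bigcup\mathcal{W}\subseteq cl_\theta(\bigcup\overline{\mathcal{M}})$. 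If one can guarantee that $p\notin cl_\theta(\bigcup\overline{\mathcal{M}})$, then $cl_\theta(\bigcup\overline{\mathcal{M}})\neq X$, so by the closing-off property $H$ meets $X\setminus cl_\theta(\bigcup\overline{\mathcal{M}})$, contradicting $H\subseteq cl_\theta(\bigcup\overline{\mathcal{M}})$.

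The step I expect to be the real obstacle is precisely ``$p\notin cl_\theta(\bigcup\overline{\mathcal{M}})$'', and it is the reason $cl_\theta$ must be built into the closing-off condition. Each $M\in\mathcal{M}$ lies inside some $W_y$ with $p\notin cl_\theta(\overline{W_y})$, so $p\notin\bigcup_{M\in\mathcal{M}}cl_\theta(\overline{M})$; but since the $\theta$-closure of a union may strictly exceed the union of the $\theta$-closures, this does not by itself keep $p$ out of $cl_\theta(\bigcup\overline{\mathcal{M}})$. To repair this one has to feed the $\pi$-character of $p$ into the choice of $\mathcal{W}$ and $\mathcal{M}$: from $p\notin cl_\theta(\overline{W_y})$ pick an open $G_y\ni p$ with $\overline{G_y}\cap\overline{W_y}=\emptyset$ and then an element $Q_y$ of a fixed local $\pi$-base at $p$ with $Q_y\subseteq G_y$, split $H$ according to the value of $Q_y$ (and, if necessary, according to $\xi(y)\in\kappa$, to keep the number of pieces below $\kappa^{+}$), and run the maximal Urysohn-cellular construction separately on each piece, arranging that the union of the closures in the piece attached to $Q$ is confined to a set on which $\overline{Q}$ certifies that $p$ is outside the corresponding $\theta$-closure. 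Making this bookkeeping genuinely close — so that the point extracted by the closing-off against each such family is forced into a piece already ``covered'' — is the one delicate point; the remaining ingredients (the cardinal arithmetic $|\mathcal{B}_\alpha|^{\kappa}=\mu^{\kappa}$, continuity at limits, and the verification of Lemma \ref{lemma} restricted to $H$) are routine.
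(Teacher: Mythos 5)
Your plan correctly reduces the theorem to a Sapirovskii-type closing-off construction and you have located exactly the step on which it turns: showing that $p\notin cl_{\theta}(\bigcup\overline{\mathcal{M}})$. But the proposal does not close this gap, and the repair you sketch would not work. First, a member $Q_y$ of a local $\pi$-base at $p$ need not contain $p$, so the disjointness $\overline{Q_y}\cap\overline{W_y}=\emptyset$ certifies nothing about $p$ lying outside a $\theta$-closure; only a genuine open neighbourhood of $p$ whose closure misses the relevant union can do that. Second, splitting $H$ according to the value of $Q_y$ produces up to $\pi\chi(X)=\mu$ pieces, and since the final contradiction must be obtained from the union of the $\theta$-closed sets coming from \emph{all} pieces simultaneously, your inductive closing-off condition (stated for a single Urysohn-cellular family $\mathcal{M}$) is too weak; strengthening it to tuples of $\mu$-many families ruins the bound $|H_\alpha|\leq\mu^{\kappa}$, since the number of such tuples at each stage is of order $(\mu^{\kappa})^{\mu}$, which can exceed $\mu^{\kappa}$.

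The missing idea, which is how the paper proceeds, is to apply $\psi_\theta$ at the point $p$ itself rather than at the points of $H$. Choose open neighbourhoods $\{U_\alpha:\alpha<\kappa\}$ of $p$ with $\bigcap_{\alpha<\kappa}cl_{\theta}(\overline{U_\alpha})=\{p\}$, put $V_\alpha=X\setminus cl_{\theta}(\overline{U_\alpha})$, and split $S$ (your $H$) into the $\kappa$ pieces $S\cap V_\alpha$. For $q\in S\cap V_\alpha$ one chooses $V_q$ with $\overline{V_q}\cap\overline{U_\alpha}=\emptyset$ and restricts the $\pi$-base at $q$ to elements contained in $V_q$; the maximal Urysohn-cellular family $\mathcal{W}'_\alpha$ produced by Lemma \ref{lemma} then satisfies $\bigcup\overline{\mathcal{W}'_\alpha}\subseteq\bigcup_{q}\overline{V_q}$, a set disjoint from $\overline{U_\alpha}$, so the single neighbourhood $U_\alpha$ of $p$ already witnesses $p\notin cl_{\theta}(\bigcup\overline{\mathcal{W}'_\alpha})$ --- no bookkeeping with a $\pi$-base at $p$ is needed, and the failure of $cl_\theta$ to commute with unions is irrelevant. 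The price is that $S$ is covered by $\kappa$-many sets $cl_{\theta}(\bigcup\overline{\mathcal{W}'_\alpha})$, so the closing-off condition must be formulated for sets of the form $\bigcup_{\gamma<\kappa}cl_{\theta}(\bigcup\overline{\mathcal{V}_\gamma})$ with each $\mathcal{V}_\gamma$ of size at most $\kappa$ (condition 3 in the paper's construction); the cardinal arithmetic survives because $|[[\mathcal{U}_\alpha]^{\leq\kappa}]^{\leq\kappa}|\leq(\mu^{\kappa})^{\kappa}=\mu^{\kappa}$. With these two changes --- the $\kappa$-fold splitting driven by the $\psi_\theta$-family at $p$, and the correspondingly strengthened inductive condition --- your argument becomes the paper's proof.
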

\begin{proof}
Let $\pi\chi(X)=\lambda$ and $Uc(X)\psi_{\theta}(X)=k$; for each $p\in X$, let $\mathcal{U}_{p}$ be a local $\pi$-base at $p$ such that $|\mathcal{U}_{p}|\leq\lambda$.

Construct an increasing chain $\{A_{\alpha}:\;\alpha<k^{+}\}$ of subsets of $X$ and a sequence $\{\mathcal{U}_{\alpha}:\;0<\alpha<k^{+}\}$ of open collections in $X$ such that:
\begin{enumerate}
\item {$|A_{\alpha}|\leq\lambda^{k}$, $0\leq\alpha<k^{+}$;}
\item{$\mathcal{U}_{\alpha}=\{V\in\mathcal{U}_{p}:\; p\in \bigcup_{\beta<\alpha}A_{\beta}\}$, $0<\alpha<k^{+}$;}
\item{for each $\gamma<k$, if $\mathcal{V}_{\gamma}\in[\mathcal{U}_{\alpha}]^{\leq k}$ and $W=\bigcup_{\gamma<k}cl_{\theta}(\bigcup \overline{\mathcal{V}_{\gamma}})\neq X$, then $A_{\alpha}\setminus W\neq\emptyset$.}
\end{enumerate}

The construction is by trasfinite induction. Let $0<\alpha<k^{+}$ and assume that $\{A_{\beta}:\;\beta<\alpha\}$ has already been constructed. Then $\mathcal{U}_{\alpha}$ is defined by 2., i.e., we put $\mathcal{U}_{\alpha}=\{V:\;\exists p\in\bigcup_{\beta<\alpha}A_{\beta},\;V\in\mathcal{U}_{p}\}$. It follows that $|\mathcal{U}_{\alpha}|\leq\lambda^{k}$. If $\{\mathcal{V}_{\gamma}\}_{\gamma<k}\in[[\mathcal{U}_{\alpha}]^{\leq k}]^{\leq k}$ and $W=\bigcup_{\gamma<k}cl_{\theta}(\bigcup\overline{\mathcal{V}_{\gamma}})\neq X$, then we can choose one point of $X\setminus W$. Let $S_{\alpha}$ be the set of points chosen in this way. Note that $|[[\mathcal{U}_{\alpha}]^{\leq k}]^{\leq k}|\leq\lambda^{k}$. Define $A_{\alpha}$ to be the set $S_{\alpha}\cup(\bigcup_{\beta<\alpha}A_{\beta})$. Then $A_{\alpha}$ satisfies 1., and 3. is also satisfied if $\beta\leq\alpha$. This completes the construction.

Now let $S=\bigcup_{\alpha<k^{+}} A_{\alpha}$; then $|S|\leq k^{+}\lambda^{k}=\lambda^{k}$. The proof is complete if $S=X$. Suppose not and let $p\in X\setminus S$; since $\psi_{\theta}(X)\leq k$, there exist open neighbourhoods $\{U_{\alpha}\}_{\alpha<k}$ of $p$ such that $\{p\}=\bigcap_{\alpha<k}cl_{\theta}(\overline{U_{\alpha}})$. For each $\alpha<k$, let $V_{\alpha}=X\setminus cl_{\theta}(\overline{U_{\alpha}})$.
Then
% $S\subseteq\bigcup_{\alpha<k}V_{\alpha}$, so 
$S=\bigcup_{\alpha<k}V_{\alpha}\cap S$.
Fix $\alpha<k$. For each $q\in V_{\alpha}\cap S$, there exists $V_{q}\in {\cal U}_q$ such that $\overline{V_{q}}\cap \overline{U_{\alpha}}=\emptyset$ (from the definition of $V_{\alpha}$). We have that $\{V\in\mathcal{U}_{q}:\;V\subseteq V_{q}\}$ is a local $\pi$-base at $q$. Since $q\in \overline{\bigcup\{V\in\mathcal{U}_{q}:\;V\subseteq V_{q}\}}$, we have that $S\cap V_{\alpha}\subseteq\bigcup_{q\in S\cap V_{\alpha}}\overline{\bigcup\{V\in\mathcal{U}_{q}:\;V\subseteq V_{q}\}}\subseteq\overline{\bigcup\{V:\;V\in\mathcal{U}_{q},\;V\subseteq V_{q},\;q\in S\cap V_{\alpha}\}}$. We put $\mathcal{W}_{\alpha}=\{V:\;V\in\mathcal{U}_{q},\;V\subseteq V_{q},\;q\in S\cap V_{\alpha}\}$. 
Since  $Uc(X)\leq k$, by Lemma \ref{lemma} we have that $\forall \alpha<k$ there exists a maximal Urysohn cellular family $\mathcal{W}^{'}_{\alpha}\in[\mathcal{W}_{\alpha}]^{\leq k}$ such that $cl_{\theta}(\bigcup\overline{\mathcal{W}^{'}_{\alpha}})\supseteq\bigcup\mathcal{W}_{\alpha}$. Since $cl_{\theta}(\bigcup\overline{\mathcal{W}^{'}_{\alpha}})$ is closed, it follows that 
%$cl_{\theta}(\bigcup\overline{\mathcal{W}^{'}_{\alpha}})\supseteq\overline{\bigcup\mathcal{W}_{\alpha}}$. In particular, 
$S\cap V_{\alpha}\subseteq \overline{\bigcup\mathcal{W}_{\alpha}}\subseteq cl_{\theta}(\bigcup\overline{\mathcal{W}^{'}_{\alpha}})\subseteq cl_{\theta}(\bigcup_{q\in S\cap V_{q}}\overline{V_{q}})$. 
%We have already noticed that for every $q\in S\cap V_{\alpha}$, $\overline{V_{q}}\cap\overline{U_{\alpha}}=\emptyset$, so 
Then, since $(\bigcup_{q\in S\cap V_{\alpha}}\overline{V_{q}})\cap \overline{U_{\alpha}}=\emptyset$ and $p\notin cl_{\theta}(\bigcup_{q\in S\cap V_{\alpha}}\overline{V_{q}})$, we have that $p\notin cl_{\theta}(\bigcup\overline{\mathcal{W}^{'}_{\alpha}})$. Put $W=\bigcup_{\alpha< k}cl_{\theta}(\bigcup\overline{\mathcal{W}^{'}_{\alpha}})$. Since $|\{V:\;V\in\mathcal{W}^{'}_{\alpha}$ for some $\alpha<k\}|\leq kk=k<k^{+}$, there is an $\alpha_{0}<k^{+}$ such that $\mathcal{W}^{'}_{\alpha}\in[\mathcal{U}_{\alpha_{0}}]^{\leq k}$ for each $\alpha< k$. Hence, by 3., one has $A_{\alpha_{0}}\setminus W\neq\emptyset$. But $W\supseteq\bigcup_{\alpha<k}(V_{\alpha}\cap S)=S$ and $A_{\alpha_{0}}\setminus W\subseteq S\setminus W=\emptyset$; a contradiction.
\end{proof}

\begin{cor}\cite{SAP}
Let $X$ be a regular space. Then $|X|\leq\pi\chi(X)^{c(X)\psi(X)}$.
\end{cor}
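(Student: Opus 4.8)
The plan is to read off this corollary from Theorem~\ref{theta-theorem}. The only things to verify are that a regular space lies in the scope of that theorem, and that in the regular case the two cardinal functions $Uc(X)$ and $\psi_\theta(X)$ occurring in the exponent are bounded by the classical $c(X)$ and $\psi(X)$. For the first point, I would recall that every regular ($T_1$) space $X$ is Urysohn: given distinct $x,y\in X$, regularity applied to $x$ and the closed set $\{y\}$ gives an open $U\ni x$ with $y\notin\overline{U}$, and regularity applied again to $y$ and the closed set $\overline{U}$ gives an open $V\ni y$ with $\overline{V}\cap\overline{U}=\emptyset$. Thus Theorem~\ref{theta-theorem} applies and yields $|X|\leq\pi\chi(X)^{Uc(X)\psi_\theta(X)}$.

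For the comparison of exponents, the inequality $Uc(X)\leq c(X)$ holds in every space (a Urysohn-cellular family is in particular a pairwise disjoint open family), as already recorded in the Introduction. For the pseudocharacter factor, recall that in a regular space $cl_\theta(A)=\overline{A}$ for every $A\subseteq X$, so that $\psi_\theta(X)=\psi_c(X)$ there, as noted above; and $\psi_c(X)=\psi(X)$ for regular $X$, since given $x\in X$ and a family $\{U_\alpha\}$ of open neighbourhoods of $x$ of cardinality $\psi(x,X)$ with $\bigcap_\alpha U_\alpha=\{x\}$, regularity lets one choose open sets $V_\alpha$ with $x\in V_\alpha\subseteq\overline{V_\alpha}\subseteq U_\alpha$, whence $\bigcap_\alpha\overline{V_\alpha}\subseteq\bigcap_\alpha U_\alpha=\{x\}$ and so $\psi_c(x,X)\leq\psi(x,X)$ (the reverse inequality being automatic from the chain $\psi\leq\psi_c$). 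Taking suprema over $x$ gives $\psi_\theta(X)=\psi(X)$.

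Putting these together and using that $\mu\mapsto\pi\chi(X)^{\mu}$ is non-decreasing (as $\pi\chi(X)\geq\aleph_0$), we obtain
\[
|X|\leq\pi\chi(X)^{Uc(X)\psi_\theta(X)}\leq\pi\chi(X)^{c(X)\psi(X)},
\]
which is the desired inequality. I do not anticipate any real obstacle: the entire mathematical weight is carried by Theorem~\ref{theta-theorem}, and the corollary is just the verification that for regular spaces the finer invariants $Uc$ and $\psi_\theta$ collapse onto the classical $c$ and $\psi$.
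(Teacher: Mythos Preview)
Your proposal is correct and follows exactly the route the paper intends: the corollary is listed immediately after Theorem~\ref{theta-theorem} with no separate proof, being an immediate consequence of that theorem together with the observations already recorded in the paper that $Uc(X)\leq c(X)$ and that $\psi_\theta(X)=\psi_c(X)$ for regular spaces (plus the standard fact $\psi_c=\psi$ in the regular case, which you spell out). Your write-up is in fact more detailed than what the paper provides.
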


%%%%%%%%%%%%%%%%%%%%%%%%%%%%%%%
\section{Variations of the Dissanayake and Willard's inequality
$|X|\leq 2^{aL_{c}(X)\chi(X)}$ and of the de Groot's inequality $|X|\leq 2^{hL(X)}$ in the class of $T_1$ spaces.}\label{Section2}
%%%%%%%%%%%%%%%%%%%%%%%%%%%%%

In Proposition \ref{prop1} it was shown that  Urysohn axiom is equivalent to $\{x\}=\bigcap\{cl_{\theta}(\overline{U}):\;U$ open, $x\in U\}$, for every point $x$ of the space. The following example shows that in spaces which are not Urysohn the previous intersection can be large.

\begin{es}\rm
Any infinite space $X$ with the cofinite topology is a $T_{1}$, not Hausdorff space for which there is a point $x$ such that $\bigcap\{cl_{\theta}(\overline{U}):\;x\in U\}$ has large cardinality.
\end{es}

The example above gives a motivation to introduce the following definition:
\begin{defin}\rm
Let $X$ be a $T_{1}$ topological space and for all $x\in X$, let
$$Uw(x)=\bigcap\{cl_{\theta}(\overline{U}):\;x\in U,\;U\;open\}.$$

The \textit{Urysohn width} is:

$$UW(X)=sup\{|Uw(x)|:\;x\in X\}.$$
\end{defin}

It is clear that if $X$ is a Urysohn space then $UW(X)=1$.

Recall that $HW(X)=sup\{|Hw(x)|:\;x\in X\}$ is the \textit{Hausdorff width}, where $Hw(x)=\bigcap\{\overline{U}:\;x\in U,\;U\;open\}$ \cite{BSS}.
Since the $\theta$-closure of a set contains its closure we have that $HW(X)\leq UW(X)$.
\begin{quest}\rm
Is $HW(X)=UW(X)$ in some class of non regular spaces?
\end{quest}

\begin{defin}\rm\cite{BSS}
Let $X$ be a space and $x\in X$. $$\psi w(x)= min\{|\mathcal{U}_{x}|:\;\bigcap\{\overline{U}:\;U\in\mathcal{U}_{x}\}=Hw(x),\;\mathcal{U}_{x}\; is\; a\;\\$$ $$family\; of\; open\; neighborhood\; of \;x\};$$
and
$$\psi w(X)= sup\{\psi w(x):\;x\in X\}.$$
\end{defin}

Similarly, we introduce the following definition.

\begin{defin}\rm Let $X$ be a space and $x\in X$.
$$\psi w_{\theta}(x)= min\{|\mathcal{U}_{x}|:\;\bigcap\{cl_{\theta}(\overline{U}):\;U\in\mathcal{U}_{x}\}=Uw(x),\;\mathcal{U}_{x}\; is\; a\;\\$$ $$family\; of\; open\; neighborhood\; of \;x\};$$
and
$$\psi w_{\theta}(X)= sup\{\psi w_{\theta}(x):\;x\in X\}.$$
\end{defin}

Of course, if $X$ is a $T_{1}$ space then $\psi w(X)\leq\psi w_{\theta}(X)\leq\chi(X)$; further if $X$ is a Urysohn space then we have that $\psi w_{\theta}(X)=\psi_{\theta}(X)$.
%\item $\psi w(X)\leq \psi w_{\theta}(X)$ ($\psi w(X)$ is defined in \cite{BSS}).

We introduce the following definition:

\begin{defin}\rm Let $Y$ be a subset of a space $X$.

The $\theta$\textit{-almost Lindel\"of degree} of a subset $Y$ of a space $X$ is

$\theta\hbox{-}aL(Y,X)=\min \{k : $ for every cover $\mathcal{V}$ of $Y$ consisting of open subsets of  $X$,
there exists $\mathcal{V'}\subseteq\mathcal{V}$  such that $|\mathcal{V'}|\leq k$ and $\bigcup\{cl_{\theta}(\overline{V}):\;V\in\mathcal{V'}\}=Y\}.$

The function $\theta$-$aL(X,X)$ is called $\theta$\textit{-almost Lindel\"of degree of the space} $X$ and denoted by $\theta$-$aL(X)$.

The $\theta$\textit{-almost  Lindel\"of degree with respect to closed subsets of }$X$, denoted by $\theta$-$aL_{c}(X)$, is the cardinal $sup\{\theta$-$ aL(C,X):\;C\subseteq X\;is\;closed\}$.

The $\theta$\textit{-almost  Lindel\"of degree with respect to $\theta$-closed subsets of }$X$, denoted by $\theta$-$aL_{\theta}(X)$, is the cardinal $sup\{\theta$-$ aL(B,X):\;B\subseteq X\;is\;\theta\hbox{-}closed\}$.
\end{defin}

Of course $\theta$-$aL(X)\leq aL(X)$, for every space $X$.
Using a slight modification of Example 2.3 in \cite{BGW} we prove that the previous inequality can be strict.

\begin{es}\rm A space $X$ such that $\theta$-$aL(X)< aL(X)$. 

 Let $k$ be any uncountable cardinal, let $\mathbb{Q}$ be the set of all the rationals and let $\mathbb{P}$ be the set of the irrationals. Put $X=(\mathbb{Q}\times k)\cup \mathbb{P}$. We topologized $X$ as follows. If $q\in\mathbb{Q}$ and $\alpha<k$ then a neighborhood base at $(q,\alpha)$ is $\mathcal{U}(q,\alpha)=\{U_{n}(q,\alpha):\;n\in\omega\}$ where $$U_{n}(q,\alpha)=\{(r,\alpha):\;r\in\mathbb{Q}\;\hbox{and}\;|r\hbox{-}q|<\frac{1}{n}\}.$$
If $p\in\mathbb{P}$ a neighborhood base at $p$ takes the form:
$$\{\{b\in\mathbb{P}:\;|b\hbox{-}p|<\frac{1}{n}\}\cup\{(q,\alpha):\;\alpha<k\;\hbox{and}\;|q\hbox{-}p|<\frac{1}{n}\}:\;n\in\omega\}.$$

For every $q\in\mathbb{Q}$, $\alpha<k$ and $n\in\omega$ we have that:
$$\overline{U_{n}(q,\alpha)}=U_{n}(q,\alpha)\bigcup\{(r,\alpha):\;r\in\mathbb{Q},\;|r\hbox{-}q|<\frac{1}{n}\}\bigcup\{p\in\mathbb{P}:\;|q\hbox{-}p|<\frac{1}{n}\};$$
and:
$$cl_{\theta}(\overline{U_{n}(q,\alpha)})=\overline{U_{n}(q,\alpha)}\bigcup\{(r,\beta):\;|r-q|<\frac{1}{n},\;\beta<k\;\hbox{and}\;\beta\neq\alpha\}.$$

Let $\alpha<k$, we have that $X=\bigcup_{q\in\mathbb{Q}} cl_{\theta}(\overline{\mathcal{U}(q,\alpha)})$ and so $\theta\hbox{-}aL(X)=\aleph_{0}$ but we have that $aL(X)=2^{\aleph_{0}}$.

\end{es}

It is easy to show that the almost Lindel\"of degree is hereditary with respect to $\theta$-closed subsets. It is natural to ask:

\begin{quest}\rm
Is the $\theta$-almost Lindel\"of degree hereditary with respect to $\theta$-closed subsets?
\end{quest}

We find out (Proposition \ref{prop2}) that the $\theta$-almost Lindel\"of degree is hereditary with respect to a new class of spaces that we call $\gamma$\textit{-closed}.
\begin{defin}\rm
Let $X$ be a topological space and $A\subseteq X$. The $\gamma$\textit{-closure} of the set $A$ is

$cl_{\gamma}(A)=\{x:$ for every open neighborhood of $X,\; cl_{\theta}(\overline{U})\cap A\neq\emptyset\}.$

\noindent$A$ is said to be $\gamma$-closed if $A = cl_\gamma(A)$.
\end{defin}

The following example shows that the $\gamma$-closure and the $\theta$-closure of a subset of a topological space can be different.
\begin{es}\label {E}\rm
A Urysohn space $X$ having a subset $Y$ such that $cl_{\gamma}(Y)\neq cl_{\theta}(Y)$.
\end{es}
\proof
Let $\mathbb{R}= A \cup B \cup C \cup D$ where $A, B, C,  D$ are pairwise disjoint and each is dense in $\mathbb{R}$.  Let $A'$ be a topological copy of $A$; points in $A'$ are denoted as $a'$ where $a \in A$.  

Let $a,b \in \mathbb{R}$.  A base for $X$ is generated by these families of open sets:  \newline 
(1)$\{(a,b) \cap A: a, b \in \mathbb{R}, a < b\}$ \newline 
(2)$\{(a,b)\cap C: a, b \in \mathbb{R}, a < b\}$ , \newline 
(3)$\{(a,b) \cap A': a, b \in \mathbb{R}, a < b\}$,  \newline  
(4)$\{(a,b) \cap (A \cup B \cup C):  a, b \in \mathbb{R}, a < b\}$, and
 \newline 
(5)$\{ (a,b) \cap(C \cup D \cup A'):  a, b \in \mathbb{R}, a < b\}$.

Note that for every $a,\;b\in\mathbb{R}$, $\overline{(a,b) \cap A} = [a,b] \cap (A \cup B)$, $\overline{(a,b) \cap A'} = [a,b] \cap (A' \cup D)$, $\overline{(a,b) \cap C} = [a,b] \cap (B \cup C \cup D)$, $cl_{\theta}(\overline{(a,b) \cap A}) = [a,b] \cap (A \cup B \cup C )$ and  $cl_{\theta}(\overline{(a,b) \cap A'}) = [a,b] \cap (A' \cup D \cup C )$. For these reasons we can say that if $a,\;b\in\mathbb{R}$ and if we put $Y=(a,b)\cap C$, we have that $cl_{\theta}(Y)=[a,b]\cap(B\cup C\cup D)$ and $cl_{\gamma}(Y)=[a,b]\cap(A\cup B\cup C\cup D\cup A')$.
\endproof

We have the following:
\begin{prop}\label{prop2}
The $\theta$-almost Lindel\"of degree is hereditary with respect to $\gamma$-closed subsets.
\end{prop}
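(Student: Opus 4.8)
The plan is to run, one level up, the same argument that makes the almost Lindel\"of degree hereditary with respect to $\theta$-closed subsets. Recall why that easy fact holds: if $Y$ is $\theta$-closed and $\mathcal V$ is a cover of $Y$ by open subsets of $X$, then every $x\in X\setminus Y=X\setminus cl_\theta(Y)$ has an open neighbourhood $U_x$ with $\overline{U_x}\cap Y=\emptyset$, so $\mathcal V\cup\{U_x:x\in X\setminus Y\}$ is an open cover of $X$; a subfamily of size $\le aL(X)$ whose closures cover $X$ must, after deleting the $U_x$'s (whose closures miss $Y$), contain $\le aL(X)$ members of $\mathcal V$ whose closures still cover $Y$. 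The point of the proposition is that $\gamma$-closedness is exactly the separation property needed to repeat this verbatim with the operator $V\mapsto cl_\theta(\overline V)$ in place of $V\mapsto\overline V$, which is the operator occurring in the definition of $\theta\hbox{-}aL$.

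Concretely, I would fix $k=\theta\hbox{-}aL(X)$ and a $\gamma$-closed set $A\subseteq X$, so $A=cl_\gamma(A)$. For each $x\in X\setminus A$ we have $x\notin cl_\gamma(A)$, hence by the definition of the $\gamma$-closure there is an open neighbourhood $U_x$ of $x$ with $cl_\theta(\overline{U_x})\cap A=\emptyset$. Given any cover $\mathcal V$ of $A$ by open subsets of $X$, I would form $\mathcal W=\mathcal V\cup\{U_x:x\in X\setminus A\}$, which is an open cover of $X$ (the members of $\mathcal V$ cover $A$, each $U_x$ covers its point $x$). Applying $\theta\hbox{-}aL(X)=k$, I extract $\mathcal W'\subseteq\mathcal W$ with $|\mathcal W'|\le k$ and $\bigcup\{cl_\theta(\overline W):W\in\mathcal W'\}=X$. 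Intersecting this equality with $A$ and discarding the sets $U_x\in\mathcal W'$, whose contribution $cl_\theta(\overline{U_x})\cap A$ is empty, the subfamily $\mathcal V'=\mathcal W'\cap\mathcal V$ satisfies $|\mathcal V'|\le k$ and $\bigcup\{cl_\theta(\overline V):V\in\mathcal V'\}\supseteq A$. Hence $\theta\hbox{-}aL(A,X)\le k=\theta\hbox{-}aL(X)$, which is the asserted heredity.

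I do not expect a genuine obstacle here; the only points needing care are (i) verifying that $\mathcal W$ covers all of $X$, and (ii) being precise about which form of heredity is claimed: exactly as for $aL$ with $\theta$-closed sets, the statement concerns $\theta\hbox{-}aL(A,X)$, with covers taken by sets \emph{open in} $X$, rather than the subspace $A$ with its own topology (where the $\theta$-closure operator behaves differently). The single real idea is the dictionary ``\emph{closure} $\leftrightarrow$ $aL$'' upgrading to ``\emph{$\theta$-closure of the closure} $\leftrightarrow$ $\theta\hbox{-}aL$'', and this upgrade is precisely what the $\gamma$-closure was defined to encode; it also indicates why passing down to merely $\theta$-closed subsets is more delicate, since there one only controls $\overline{U_x}\cap A$, which need not yield $cl_\theta(\overline{U_x})\cap A=\emptyset$.
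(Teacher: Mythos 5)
Your proof is correct and is essentially identical to the paper's: both use the $\gamma$-closedness of the set to pick, for each outside point $x$, an open $U_x$ with $cl_\theta(\overline{U_x})$ missing the set, enlarge the given cover by these $U_x$, apply $\theta\hbox{-}aL(X)\le k$ to the resulting cover of $X$, and then discard the added sets. Your remark (ii) about the heredity being in the sense of $\theta\hbox{-}aL(A,X)$ with covers open in $X$ is a reasonable reading of what the paper actually proves.
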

\proof
Let $X$ be a topological space such that $\theta\hbox{-}aL(X)\leq k$ and let $C\subseteq X$ be $\gamma$-closed set. $\forall x\in X\setminus C$ we have that there exists an open neighborhood $U_{x}$ of $x$ such that $cl_{\theta}(\overline{U})\subseteq X\setminus C$. Let $\mathcal{U}$ be a cover of $C$ consisting of open subsets of $X$. Then $\mathcal{V}=\mathcal{U}\bigcup\{U_{x}:\;x\in X\setminus C\}$ is an open cover of $X$ and since $\theta\hbox{-}aL(X)\leq k$, there exists
$\mathcal{V'}\in[\mathcal{V}]^{\leq k}$ such that $X=\bigcup \{cl_{\theta}(\overline{V}): V\in {\mathcal{V'}}\}$. Then there exists
$\mathcal{V''}\in[\mathcal{U}]^{\leq k}$ such that $C\subseteq \bigcup \{cl_{\theta}(\overline{V}): V\in {\mathcal{V''}}\}$; this proves that
$\theta\hbox{-}aL(C)\leq k$.
\endproof

Now we use $UW(X)$ and $\theta\hbox{-}aL_{\theta}(X)$ to restate Theorem 2.22 in \cite{BSS} in the class of $n$-Uysohn spaces. The proof follows step by step the proof of Theorem 2.22 in \cite{BSS}.

\begin{theorem} 
If $X$ is a $T_{1}$ $n$-Urysohn ($n\in\omega$)  space, then $|X|\leq UW(X)2^{\theta\hbox{-}aL_{\theta}(X)\chi(X)}$.
\end{theorem}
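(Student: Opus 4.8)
The plan is to run the closing-off argument used to prove Theorem 2.22 in \cite{BSS}, replacing the closure operator everywhere by $cl_{\theta}$, the $n$-Hausdorff separation by the $n$-Urysohn property, and $HW(X)$ by $UW(X)$. Put $\kappa=\theta\hbox{-}aL_{\theta}(X)\chi(X)$; the target is $|X|\leq UW(X)\cdot 2^{\kappa}$. If $UW(X)>2^{\kappa}$ this only claims $|X|\leq UW(X)$, so I would settle that case at the outset and afterwards assume $UW(X)\leq 2^{\kappa}$. For each $x\in X$ fix a neighbourhood base $\{V_{\beta}(x):\beta<\chi(X)\}$ consisting of open sets.

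The first step is a cardinality estimate for $cl_{\theta}$ playing the role of the bound on cardinalities of closures used in \cite{BSS}: for every $B\subseteq X$ one has $|cl_{\theta}(B)|\leq|B|^{\chi(X)}\cdot UW(X)$, so in particular $|cl_{\theta}(B)|\leq 2^{\kappa}$ whenever $|B|\leq 2^{\kappa}$. I would prove this much as Proposition \ref{P}: for $x\in cl_{\theta}(B)$, use $t_{\theta}(X)\leq\chi(X)$ to choose $B_{x}\in[B]^{\leq\chi(X)}$ with $x\in cl_{\theta}(B_{x})$; observe, exactly as in the proof of Proposition \ref{P}, that $x\in cl_{\theta}(\overline{V_{\beta}(x)}\cap B_{x})$ for every $\beta$, and choose $B_{x,\beta}\in[\overline{V_{\beta}(x)}\cap B_{x}]^{\leq\chi(X)}$ with $x\in cl_{\theta}(B_{x,\beta})$. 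Then the assignment $x\mapsto\langle B_{x,\beta}:\beta<\chi(X)\rangle$ takes values in a set of size $\leq|B|^{\chi(X)}$, and each of its fibres is contained in $\bigcap_{\beta<\chi(X)}cl_{\theta}(\overline{V_{\beta}(x)})=\bigcap\{cl_{\theta}(\overline{U}):x\in U\text{ open}\}=Uw(x)$, which has cardinality $\leq UW(X)$.

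Next I would build, by transfinite recursion, an increasing chain $\langle D_{\alpha}:\alpha<\kappa^{+}\rangle$ of subsets of $X$ with $|D_{\alpha}|\leq 2^{\kappa}$, $cl_{\theta}(D_{\alpha})\subseteq D_{\alpha+1}$, and the closing-off property: whenever $\mathcal{W}\subseteq\bigcup_{q\in D_{\alpha}}\{V_{\beta}(q):\beta<\chi(X)\}$ satisfies $|\mathcal{W}|\leq\kappa$ and $\bigcup\{cl_{\theta}(\overline{W}):W\in\mathcal{W}\}\neq X$, then $D_{\alpha+1}\not\subseteq\bigcup\{cl_{\theta}(\overline{W}):W\in\mathcal{W}\}$. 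At a successor stage one adjoins to $D_{\alpha}$ one point from the complement of each such ``bad'' union -- there are at most $(2^{\kappa}\cdot\kappa)^{\kappa}=2^{\kappa}$ of them -- and then applies $cl_{\theta}$; by the previous paragraph this keeps $|D_{\alpha+1}|\leq 2^{\kappa}$. At limit stages take unions. Let $D=\bigcup_{\alpha<\kappa^{+}}D_{\alpha}$; then $|D|\leq\kappa^{+}\cdot 2^{\kappa}=2^{\kappa}$, and $D$ is $\theta$-closed: if $x\in cl_{\theta}(D)$ then, since $t_{\theta}(X)\leq\kappa$, $x\in cl_{\theta}(C)$ for some $C\in[D]^{\leq\kappa}$; by regularity of $\kappa^{+}$, $C\subseteq D_{\alpha}$ for some $\alpha<\kappa^{+}$, so $x\in cl_{\theta}(D_{\alpha})\subseteq D_{\alpha+1}\subseteq D$.

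Finally I would show $X=\bigcup\{Uw(q):q\in D\}$, which yields $|X|\leq|D|\cdot UW(X)\leq UW(X)\,2^{\theta\hbox{-}aL_{\theta}(X)\chi(X)}$. Assume not, and fix $p\in X\setminus\bigcup\{Uw(q):q\in D\}$. For each $q\in D$, since $p\notin Uw(q)=\bigcap\{cl_{\theta}(\overline{U}):q\in U\text{ open}\}$, there is an open $U\ni q$ with $p\notin cl_{\theta}(\overline{U})$, hence a $V_{q}\in\{V_{\beta}(q):\beta<\chi(X)\}$ with $V_{q}\subseteq U$ and $p\notin cl_{\theta}(\overline{V_{q}})$. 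Then $\{V_{q}:q\in D\}$ is an open cover of the $\theta$-closed set $D$, so $\theta\hbox{-}aL(D,X)\leq\theta\hbox{-}aL_{\theta}(X)\leq\kappa$ gives $D'\in[D]^{\leq\kappa}$ with $D\subseteq\bigcup\{cl_{\theta}(\overline{V_{q}}):q\in D'\}$; this union omits $p$, so it is $\neq X$. Since $|D'|\leq\kappa<\kappa^{+}$ and the chain is increasing and continuous of length $\kappa^{+}$, $D'\subseteq D_{\alpha_{0}}$ for some $\alpha_{0}<\kappa^{+}$; thus $\{V_{q}:q\in D'\}$ witnesses the closing-off hypothesis at stage $\alpha_{0}$ and $D_{\alpha_{0}+1}\not\subseteq\bigcup\{cl_{\theta}(\overline{V_{q}}):q\in D'\}$, contradicting $D_{\alpha_{0}+1}\subseteq D\subseteq\bigcup\{cl_{\theta}(\overline{V_{q}}):q\in D'\}$. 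The main obstacle, and the one point where the proof is not a verbatim transcription of \cite{BSS}, is the cardinality bookkeeping in the second and third paragraphs: since $cl_{\theta}$ is not idempotent, the intermediate sets $D_{\alpha}$ need not be $\theta$-closed and $\theta$-closedness of $D$ is recovered only at the top of the recursion through $t_{\theta}(X)\leq\chi(X)$, while keeping all stages of size $\leq 2^{\kappa}$ rests on the $UW$-sensitive estimate $|cl_{\theta}(B)|\leq|B|^{\chi(X)}UW(X)$, the $n$-Urysohn counterpart of the closure bound of \cite{BSS}.
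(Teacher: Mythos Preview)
Your closing-off argument mirrors the paper's, but there is a real gap: the sentence ``If $UW(X)>2^{\kappa}$ this only claims $|X|\leq UW(X)$, so I would settle that case at the outset'' is not justified. Since $Uw(x)\subseteq X$ one always has $UW(X)\leq|X|$, so the inequality $|X|\leq UW(X)$ is not automatic; nothing in the hypotheses forces $|X|=UW(X)$ when $UW(X)>2^{\kappa}$. And your recursion genuinely breaks down in that case: with the bound $|cl_{\theta}(B)|\leq|B|^{\chi(X)}\cdot UW(X)$, keeping $|D_{\alpha}|\leq 2^{\kappa}$ requires $UW(X)\leq 2^{\kappa}$, and attempting instead to keep $|D_{\alpha}|\leq UW(X)\cdot 2^{\kappa}$ fails because the number of admissible $\mathcal{W}$'s then jumps to $UW(X)^{\kappa}$.

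The missing idea is precisely the $n$-Urysohn hypothesis, which your argument never invokes. The paper uses it through the result of \cite{BCM} that in an $n$-Urysohn space $|[A]_{\theta}|\leq|A|^{\chi(X)}$ for every $A\subseteq X$; this bound is free of $UW(X)$. With it one takes $H_{\alpha}=[\,\bigcup_{\beta<\alpha}H_{\beta}\cup C_{\alpha}\,]_{\theta}$ at each step, so every $H_{\alpha}$ is $\theta$-closed and $|H_{\alpha}|\leq 2^{\kappa}$ without any assumption on $UW(X)$. The factor $UW(X)$ then enters only once, at the very end, when one passes from $H=\bigcup_{\alpha<\kappa^{+}}H_{\alpha}$ to $H^{*}=\bigcup\{Uw(x):x\in H\}$ and reads off $|X|\leq|H|\cdot UW(X)\leq UW(X)\cdot 2^{\kappa}$. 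Your estimate $|cl_{\theta}(B)|\leq|B|^{\chi(X)}\cdot UW(X)$ is correct (and in fact holds for arbitrary $T_{1}$ spaces), but it is too weak here because it entangles $UW(X)$ into the size of the intermediate sets; replacing it by the \cite{BCM} bound removes the need for a case split and completes the proof.
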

\proof
Let $UW(X)\leq k$, $\theta$-$ aL_{\theta}(X)\chi(X)\leq\tau$. For all $x\in X$, let $\mathcal{U}_{x}$ be a local base and $|\mathcal{U}_{x}|\leq\tau$. Note that for all $x\in X$, $Uw(x)=\bigcap\{cl_{\theta}(\overline{U}):\;U\in\mathcal{U}_{x}\}$. Construct $\{H_{\alpha}:\;\alpha\in\tau^{+}\}$ and $\{\mathcal{B}_{\alpha}:\;\alpha\in\tau^{+}\}$ such that:
\begin{enumerate}
\item $H_{\alpha}\subset H_{\beta}\subset X$, for all $\alpha,\;\beta\in\tau^{+}$;
\item $H_{\alpha}$ is $\theta$-closed for all $\alpha\in\tau^{+}$;
\item $|H_{\alpha}|\leq 2^{\tau}$ for all $\alpha\in\tau^{+}$;
\item if $\{H_{\beta}:\;\beta\in\alpha\}$ are defined for some $\alpha\in\tau^{+}$, then $\mathcal{B}_{\alpha}=\bigcup\{\mathcal{U}_{x}:\;x\in\bigcup\{H_{\beta}:\;\beta\in\alpha\}\}$;
\item if $\alpha\in\tau^{+}$ and $\mathcal{W}\in[\mathcal{B}_{\alpha}]^{\leq\tau}$ is such that $X\setminus(\bigcup\{cl_{\theta}(\overline{U}):\;U\in\mathcal{W}\})\neq\emptyset$ then $H_{\alpha}\setminus(\bigcup\{cl_{\theta}(\overline{U}):\;U\in\mathcal{W}\})\neq\emptyset$.
\end{enumerate}

Let $\alpha\in\tau^{+}$ and $\{H_{\beta}:\beta\in\alpha\}$ be already defined. For all $\mathcal{W}$ as in 5., choose a point $x(\mathcal{W})\in X\setminus(\bigcup\{cl_{\theta}(\overline{U}):\;U\in\mathcal{W}\})$ and let $C_{\alpha}$ be the set of these points. Let $H_{\alpha}=[\bigcup\{H_{\beta}:\;\beta\in\alpha\}\cup C_{\alpha}]_{\theta}$. Considering the fact that if $X$ is a $n$-Urysohn space we have that for every $A\subseteq X$, $|[A]_{\theta}|\leq|A|^{\chi(X)}$ \cite{BCM} we have that $|H_{\alpha}|\leq2^{\tau}$. Let $H=\bigcup\{H_{\beta}:\;\beta\in\tau^{+}\}$. Since $t_{\theta}(X)\leq\chi(X)\leq\tau$, $\tau^{+}$ is regular and $\{H_{\alpha}:\;\alpha\in\tau^{+}\}$ is an increasing family of my{$\theta$-}closed sets of lenght $\tau^{+}$, we have that $H$ is $\theta$-closed. Also $|H|\leq 2^{\tau}$. Let $H^{*}=\bigcup\{Uw(x):\;x\in H\}\supseteq H$. Then $|H^{*}|\leq k2^{\tau}$.

We want to prove that $X=H^{*}$. Suppose that there exists a point $q\in X\setminus H^{*}\subset X\setminus H$. Then for all $x\in H$ there is $U(x)\in\mathcal{U}_{x}$ such that $q\notin cl_{\theta}(\overline{U(x)})$. From $\theta$-$aL_{\theta}(X)\leq \tau$ choose $H^{'}\in[H]^{\leq\tau}$ such that $H\subseteq \bigcup\{cl_{\theta}(\overline{U(x)}):\;x\in H^{'}\}$. Then $H^{'}\subseteq H_{\alpha}$ for some $\alpha\in\tau^{+}$ and hence $\mathcal{W}=\{cl_{\theta}(\overline{U(x)}):x\in H^{'}\}$ $\in[\mathcal{B}_{\alpha +1}]^{\leq\tau}$ and $q\in X\setminus(\bigcup\{cl_{\theta}(\overline{U}):\;U\in\mathcal{W}\})\neq\emptyset$. Hence we have already chosen $x(\mathcal{W})\in H_{\alpha}+\}\cap(H\setminus\bigcup\{cl_{\theta}(\overline{U(x)}):\;x\in H^{'}\})\subseteq H\cap(X\setminus H)$ a contradiction. Hence $X=H^{*}$ and $|X|\leq k2^{\tau}$.
\endproof

Now we use $UW(X)$,$\psi w_{\theta}(X)$  and $h\theta\hbox{-}aL(X)$ to present a variation of the Theorem 2.23 in \cite{BSS}. The proof of Theorem~\ref{1} follows step by step the proof of Theorem 2.23 in \cite{BSS}.

\begin{theorem} \label{1}
If $X$ is a $T_{1}$ space then $|X|\leq UW(X)\psi w_{\theta}(X)^{h\theta\hbox{-}aL(X)}$.
\end{theorem}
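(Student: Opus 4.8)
The plan is to run a closing-off argument of exactly the same shape as the proof of Theorem~2.23 in \cite{BSS}, replacing closures by $\theta$-closures of closures throughout, $Hw(x)$ by $Uw(x)$, $\psi w$ by $\psi w_{\theta}$, $HW$ by $UW$, and $haL$ by the hereditary $\theta$-almost Lindel\"of degree $h\theta\hbox{-}aL(X)=\sup\{\theta\hbox{-}aL(Y,X):\;Y\subseteq X\}$. Write $\mu=UW(X)$, $\lambda=\psi w_{\theta}(X)$ and $\kappa=h\theta\hbox{-}aL(X)$; the goal is $|X|\leq\mu\lambda^{\kappa}$. First, for each $x\in X$ use $\psi w_{\theta}(x)\leq\lambda$ to fix a family $\mathcal{V}_{x}$ of open neighbourhoods of $x$ with $|\mathcal{V}_{x}|\leq\lambda$ and $\bigcap\{cl_{\theta}(\overline{V}):\;V\in\mathcal{V}_{x}\}=Uw(x)$, and recall that $|Uw(x)|\leq\mu$ by definition of $UW$.

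Next I would construct, by transfinite recursion on $\alpha<\kappa^{+}$, an increasing chain $\{F_{\alpha}:\;\alpha<\kappa^{+}\}$ of subsets of $X$ together with the open collections $\mathcal{W}_{\alpha}=\bigcup\{\mathcal{V}_{x}:\;x\in\bigcup_{\beta<\alpha}F_{\beta}\}$, arranging that $|F_{\alpha}|\leq\lambda^{\kappa}$ and that the following closing-off clause holds: whenever $\mathcal{A}\in[\mathcal{W}_{\alpha}]^{\leq\kappa}$ satisfies $X\setminus\bigcup\{cl_{\theta}(\overline{V}):\;V\in\mathcal{A}\}\neq\emptyset$, then $F_{\alpha}$ meets $X\setminus\bigcup\{cl_{\theta}(\overline{V}):\;V\in\mathcal{A}\}$. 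At stage $\alpha$, given $\{F_{\beta}:\;\beta<\alpha\}$, one has $|\mathcal{W}_{\alpha}|\leq\lambda^{\kappa}$, hence $|[\mathcal{W}_{\alpha}]^{\leq\kappa}|\leq(\lambda^{\kappa})^{\kappa}=\lambda^{\kappa}$, so one may pick one witness point against each such $\mathcal{A}$ and adjoin these points to $\bigcup_{\beta<\alpha}F_{\beta}$ to obtain $F_{\alpha}$ of size $\leq\lambda^{\kappa}$ (the cardinal arithmetic, including $\kappa^{+}\leq\lambda^{\kappa}$, is routine under the usual convention that these invariants are infinite). Set $F=\bigcup_{\alpha<\kappa^{+}}F_{\alpha}$, so $|F|\leq\kappa^{+}\lambda^{\kappa}=\lambda^{\kappa}$, and $F^{*}=\bigcup\{Uw(x):\;x\in F\}$, so $|F^{*}|\leq\mu\lambda^{\kappa}$.

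It then remains to prove $X=F^{*}$, which is where the hypotheses enter. Suppose $q\in X\setminus F^{*}$. For each $x\in F$ we have $q\notin Uw(x)=\bigcap\{cl_{\theta}(\overline{V}):\;V\in\mathcal{V}_{x}\}$, so choose $V(x)\in\mathcal{V}_{x}$ with $q\notin cl_{\theta}(\overline{V(x)})$. Then $\{V(x):\;x\in F\}$ is a cover of $F$ by open subsets of $X$, so since $\theta\hbox{-}aL(F,X)\leq h\theta\hbox{-}aL(X)\leq\kappa$ there is $F'\in[F]^{\leq\kappa}$ with $F\subseteq\bigcup\{cl_{\theta}(\overline{V(x)}):\;x\in F'\}$. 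As $|F'|\leq\kappa$ and $\kappa^{+}$ is regular, there is $\alpha<\kappa^{+}$ with $F'\subseteq\bigcup_{\beta<\alpha}F_{\beta}$, whence $\mathcal{A}:=\{V(x):\;x\in F'\}\in[\mathcal{W}_{\alpha}]^{\leq\kappa}$. Since $q$ lies outside each $cl_{\theta}(\overline{V(x)})$ with $x\in F'$, the set $X\setminus\bigcup\{cl_{\theta}(\overline{V}):\;V\in\mathcal{A}\}$ is nonempty, so the closing-off clause at stage $\alpha$ supplies a point $p\in F_{\alpha}\subseteq F$ lying outside $\bigcup\{cl_{\theta}(\overline{V}):\;V\in\mathcal{A}\}$; but $p\in F\subseteq\bigcup\{cl_{\theta}(\overline{V}):\;V\in\mathcal{A}\}$, a contradiction. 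Hence $X=F^{*}$ and $|X|\leq\mu\lambda^{\kappa}=UW(X)\psi w_{\theta}(X)^{h\theta\hbox{-}aL(X)}$.

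Once the scheme is fixed the argument is mostly bookkeeping; the one delicate point is the last paragraph, namely guaranteeing that the particular family $\{V(x):\;x\in F'\}$ actually occurs as one of the ``guesses'' $\mathcal{A}\in[\mathcal{W}_{\alpha}]^{\leq\kappa}$ at some stage $\alpha<\kappa^{+}$ --- which is exactly what forces the use of the \emph{hereditary} $\theta$-almost Lindel\"of degree (to keep $F'$ of size $\leq\kappa$) together with the regularity of $\kappa^{+}$ --- and then observing that the witness point chosen against that guess lies simultaneously in $F$ (hence is $\theta$-covered by $\mathcal{A}$) and, by construction, outside those $\theta$-closures. Note that, unlike the proof of the preceding theorem, no bound on $\theta$-tightness is needed here, since $F$ is taken to be simply the union of the chain rather than a $\theta$-closed hull.
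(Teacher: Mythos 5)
Your proposal is correct and is essentially the same closing-off argument as the paper's proof (which itself follows Theorem 2.23 of \cite{BSS}): a transfinite chain of length $(h\theta\hbox{-}aL(X))^{+}$ with witness points against all small subfamilies of the accumulated neighbourhood families, the hereditary $\theta$-almost Lindel\"of degree applied to $F$ at the end, and the $UW(X)$ factor entering only through $F^{*}=\bigcup\{Uw(x):x\in F\}$. The only (harmless, in fact slightly streamlining) deviation is that your $\mathcal{W}_{\alpha}$ is indexed just over $x\in\bigcup_{\beta<\alpha}F_{\beta}$, whereas the paper's $\mathcal{B}_{\alpha}$ is indexed over the $Uw$-expanded set $\bigcup\{Uw(y):y\in\bigcup_{\beta<\alpha}H_{\beta}\}$ and carries the factor $k=UW(X)$ in the bound on $|H_{\alpha}|$ at every stage; since the final covering step only uses families $\mathcal{V}_{x}$ with $x\in F$, your leaner bookkeeping suffices.
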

\proof
Let $UW(X)\leq k$, $h\theta$-$aL(X)\leq\tau$ and $\psi w_{\theta}(X)\leq\lambda$. For all $x\in X$, let $\mathcal{U}_{x}$ be a family of open neighborhood of $x$ such that $|\mathcal{U}_{x}|\leq\lambda$ and  $Uw(x)=\bigcap\{cl_{\theta}(\overline{U}):\;U\in\mathcal{U}_{x}\}$. By trasfinite induction we construct two families $\{H_{\alpha}:\;\alpha\in\tau^{+}\}$ and $\{\mathcal{B}_{\alpha}:\;\alpha\in\tau^{+}\}$ such that:
\begin{enumerate}
\item $\{H_{\alpha}:\;\alpha\in\tau^{+}\}$ is an increasing sequence of subsets of $X$;

\item $|H_{\alpha}|\leq k\lambda^{\tau}$ for all $\alpha\in\tau^{+}$;
\item if $\{H_{\beta}:\;\beta\in\alpha\}$ are defined for some $\alpha\in\tau^{+}$, then $\mathcal{B}_{\alpha}=\bigcup\{\mathcal{U}_{x}:\;x\in\bigcup\{Uw(y):\;y\in\bigcup\{H_{\beta}:\;\beta\in\alpha\}\}\}$;
\item if $\alpha\in\tau^{+}$ and $\mathcal{W}\in[\mathcal{B}_{\alpha}]^{\leq\tau}$ is such that $X\setminus(\bigcup\{cl_{\theta}(\overline{U}):\;U\in\mathcal{W}\})\neq\emptyset$ then $H_{\alpha}-(\bigcup\{cl_{\theta}(\overline{U}):\;U\in\mathcal{W}\})\neq\emptyset$.
\end{enumerate}
Let $\alpha\in\tau^{+}$ and $\{H_{\beta}:\beta\in\alpha\}$ be already defined. For all $\mathcal{W}$ as in 4., choose a point $x(\mathcal{W})\in X\setminus(\bigcup\{cl_{\theta}(\overline{U}):\;U\in\mathcal{W}\})$ and let $C_{\alpha}$ be the set of these points.

Let $H_{\alpha}=\bigcup\{H_{\beta}:\;\beta\in\alpha\}\cup C_{\alpha}$. Then $|H_{\alpha}\leq k\lambda^{\tau}$.

Let $H=\bigcup\{H_{\alpha}:\;\alpha\in\tau^{+}\}$ and $H^{*}=\bigcup\{Uw(x):\;x\in H\}\supseteq H$. Then $|H^{*}|\leq k\lambda^{\tau}$.

We want to prove that $X=H^{*}$. Suppose that there exists a point $q\in X\setminus H^{*}$. Then $q\notin Uw(x)$, $\forall x\in H$. Hence for all $x\in H$ there is $U(x)\in\mathcal{U}_{x}$ such that $q\notin cl_{\theta}(\overline{U(x)})$. From $h\theta$-$aL(X)\leq \tau$ choose $H^{'}\in[H]^{\leq\tau}$ such that $H\subseteq \bigcup\{cl_{\theta}(\overline{U(x)}):\;x\in H^{'}\}$. Let $\mathcal{W}=\{\overline{U(x)}:x\in H^{'}\}$. We have that $H^{'}\subseteq H_{\alpha}$ for some $\alpha\in\tau^{+}$ and $\mathcal{W}\in[\mathcal{B}_{\alpha +1}]^{\leq\tau}$ and $X\setminus(\bigcup\{cl_{\theta}(\overline{U}):\;U\in\mathcal{W}\})\neq\emptyset$. Hence we have already chosen $x(\mathcal{W})\in X\setminus(\bigcup\{cl_{\theta}(\overline{U}):\;U\in\mathcal{W}\}\subseteq X\setminus H$ and $x(\mathcal{W})\in H$ a contradiction. Hence $X=H^{*}$ and $|X|\leq k\lambda^{\tau}$.
\endproof

\begin{cor}
If $X$ is a Urysohn space then $|X|\leq \psi_{\theta}(X)^{h\theta\hbox{-}aL(X)}$.
\end{cor}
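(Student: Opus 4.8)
The plan is to obtain this corollary as an immediate specialization of Theorem~\ref{1}. First I would note that every Urysohn space is $T_1$, so Theorem~\ref{1} applies and yields $|X|\le UW(X)\,\psi w_{\theta}(X)^{h\theta\hbox{-}aL(X)}$; it then suffices to evaluate the two ``width''-type parameters in the Urysohn setting.

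Next I would invoke Proposition~\ref{prop1}, which says precisely that in a Urysohn space $Uw(x)=\{x\}$ for every $x\in X$; hence $UW(X)=\sup_{x}|Uw(x)|=1$ and the first factor in the bound disappears. The same fact $Uw(x)=\{x\}$ identifies $\psi w_{\theta}(x)$ --- the least cardinality of a family $\mathcal{U}_{x}$ of open neighborhoods of $x$ whose $\theta$-closures of closures intersect in $Uw(x)$ --- with $\psi_{\theta}(x,X)$ of Definition~\ref{theta-pseudocharacter}; taking suprema over $x\in X$ gives $\psi w_{\theta}(X)=\psi_{\theta}(X)$, the equality already remarked after the definition of $\psi w_{\theta}$.

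Substituting $UW(X)=1$ and $\psi w_{\theta}(X)=\psi_{\theta}(X)$ into the inequality of Theorem~\ref{1} gives $|X|\le\psi_{\theta}(X)^{h\theta\hbox{-}aL(X)}$, which is the assertion. I do not expect any genuine obstacle: the only step that merits an explicit sentence is the identification $\psi w_{\theta}(X)=\psi_{\theta}(X)$, and even that is read directly off the definitions once $Uw(x)=\{x\}$ is in hand. (As usual for such cardinality inequalities, a finite value of $\psi_{\theta}(X)$ is understood as $\aleph_{0}$.)
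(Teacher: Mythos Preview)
Your proposal is correct and is exactly the intended derivation: the paper states the corollary without proof, relying on the already-noted facts that $UW(X)=1$ and $\psi w_{\theta}(X)=\psi_{\theta}(X)$ for Urysohn $X$, together with Theorem~\ref{1}. There is nothing to add.
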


%%%%%%%%%%%%%%%%%%%%%%%%%%%%%%%%%%
\section{The Urysohn point separating weight}\label{section3}
%%%%%%%%%%%%%%%%%%%%%%%%%%%%%%%%

\begin{defin}\rm\cite{BCCS}
A \textit{Hausdorff point separating open cover} $\mathcal{S}$ for a space $X$ is an open cover of $X$ having the property that for each distinct points $x,\;y\in X$ there exists $S\in\mathcal{S}$ such that $x\in S$ and $y\notin \overline{S}$. 

The \textit{Hausdorff point separating weight of a space} $X$  is 

$Hpsw(X)=min\{\tau:\;X\;\hbox{has a Hausdorff point separating open cover}\;\mathcal{S}\;$ $\hbox{such that each point of}\;X\;\hbox{is contained in at most}\;\tau\;\hbox{elements of}\;\mathcal{S}\}.$
\end{defin}

Following the same idea as in \cite{BCCS} we introduce the following definition:
\begin{defin}\rm
A \textit{Urysohn point separating open cover} $\mathcal{S}$ for a space $X$ is an open cover of $X$ having the property that for each distinct points $x,\;y\in X$ there exists $S\in\mathcal{S}$ such that $x\in S$ and $y\notin cl_{\theta}(\overline{S})$.
\end{defin}

\begin{defin}\rm
The \textit{Urysohn point separating weight} of a Urysohn space $X$ is the cardinal:
$$Upsw(X)=min\{\tau:\;X\;\hbox{has a Urysohn point separating open cover}\;\mathcal{S}\;\\$$ $$\hbox{such that each point of X is contained in at most}\;\tau\hbox{elements of}\;\mathcal{S}\}+\aleph_{0}.$$
\end{defin}

Note that $Hpsw(X)\leq Upsw(X)$, for every Urysohn space $X$.

\noindent The proof of the following theorem follows step by step the proof of Theorem 20 in \cite{BCCS}.
\begin{theorem}
If $X$ is a Urysohn space then $nw(X)\leq Upsw(X)^{\theta\hbox{-}aL_{c}(X)}$.
\end{theorem}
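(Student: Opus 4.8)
The plan is to mimic the classical Pol--Šapirovskiĭ style argument used for $nw(X) \le psw(X)^{L(X)\psi(X)}$ and its Hausdorff analogue (Theorem 20 in \cite{BCCS}), adapting every step from the closure operator to the $\theta$-closure operator and from $aL_c$ to $\theta\hbox{-}aL_c$. Set $\tau = Upsw(X)$ and $\kappa = \theta\hbox{-}aL_c(X)$, and fix a Urysohn point separating open cover $\mathcal{S}$ witnessing $Upsw(X) \le \tau$, so each $x \in X$ lies in at most $\tau$ members of $\mathcal{S}$. First I would recall (or verify) that $\mathcal{S}$, being point-separating in the Urysohn sense, is in particular a network-generating family: the collection $\mathcal{N}$ of all sets of the form $\bigcap \mathcal{S}_x$ where $\mathcal{S}_x = \{S \in \mathcal{S} : x \in S\}$ actually separates points, since for $x \ne y$ there is $S \in \mathcal{S}$ with $x \in S$, $y \notin cl_\theta(\overline S) \supseteq \overline S \supseteq S$, so $y \notin \bigcap\mathcal{S}_x$. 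The idea is to build a network of size $\le \tau^\kappa$ by a closing-off construction.

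The key steps, in order, are as follows. (1) Build, by transfinite recursion of length $\kappa^+$, an increasing chain of subfamilies $\mathcal{S}_\alpha \subseteq \mathcal{S}$ with $|\mathcal{S}_\alpha| \le \tau^\kappa$, closing off under the following operation: whenever $\mathcal{W} \in [\mathcal{S}_\alpha]^{\le\kappa}$ and $C := X \setminus \bigcup\{cl_\theta(\overline W): W \in \mathcal{W}\}$ is nonempty, choose for each such $\mathcal{W}$ a point of $C$ and throw all the members of $\mathcal S$ containing that point into $\mathcal{S}_{\alpha+1}$ (there are at most $\tau$ of them per chosen point, and at most $\tau^\kappa$ choices of $\mathcal W$, so the bound $\tau^\kappa$ is preserved). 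Let $\mathcal{S}^* = \bigcup_{\alpha<\kappa^+}\mathcal{S}_\alpha$; then $|\mathcal{S}^*|\le\tau^\kappa$ and $\mathcal{S}^*$ is closed under the operation above by regularity of $\kappa^+$. (2) Let $Y$ be the union of the chosen points (a set of size $\le\tau^\kappa$), and let $\mathcal{N}^*$ be the network generated by $\mathcal{S}^*$ in the obvious way (finite or $\kappa$-sized intersections of members, plus complements of $\theta$-closures of members, enough to get a network of weight $\le\tau^\kappa$ on $Y$ and its relevant subsets). The crux is to show $Y$ is dense -- indeed that $\overline{Y} = X$ or that $Y$ meets every nonempty relevant set -- and then that $|X| \le |Y|^{\text{something}}$ is not quite what we want; rather, we want $nw(X) \le \tau^\kappa$ directly. (3) The closing argument: suppose $\mathcal{N}^*$ is not a network for $X$. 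Then there are a point $q$ and an open $U \ni q$ with no member of $\mathcal{N}^*$ sandwiched between $\{q\}$ and $U$; using that $\mathcal S$ separates points, for each $x \in Y \setminus U$ (or each $x$ in the relevant closed set $C = \overline{Y}\setminus U$, which is closed) pick $S(x) \in \mathcal{S}$ with $x \in S(x)$ and $q \notin cl_\theta(\overline{S(x)})$. Since $\theta\hbox{-}aL_c(X) \le \kappa$ and $C$ is closed, extract $\mathcal{W} = \{S(x) : x \in H'\}$ with $|H'|\le\kappa$ and $C \subseteq \bigcup\{cl_\theta(\overline{S(x)}):x\in H'\}$. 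Now $\mathcal{W} \in [\mathcal{S}^*]^{\le\kappa}$ (after checking $H' \subseteq Y$ lands inside some $\mathcal{S}_\alpha$), and $q \notin \bigcup\{cl_\theta(\overline W):W\in\mathcal W\}$, so by the closing-off property a point of $X \setminus \bigcup\{cl_\theta(\overline W):W\in\mathcal W\}$ was already chosen into $Y$; but that union contains $C \supseteq Y \setminus U$, forcing the chosen point into $U$, which combined with the separating property yields an element of $\mathcal N^*$ between $\{q\}$ and $U$ after all -- a contradiction. Hence $nw(X) \le |\mathcal{N}^*| \le \tau^\kappa = Upsw(X)^{\theta\hbox{-}aL_c(X)}$.

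The main obstacle I anticipate is step (3): matching up the combinatorics so that the point "already chosen" genuinely certifies a network element separating $q$ from $U$. In the Hausdorff/regular versions one uses that $\overline S$ (resp. $cl_\theta(\overline S)$) behaves well under the point-separation hypothesis, and the delicate point is that the $\theta$-closure operator is not idempotent, so one must be careful that $X \setminus cl_\theta(\overline S)$ is genuinely open (it is, since $cl_\theta$ of any set is $\theta$-closed hence closed) and that the cover $\{X \setminus cl_\theta(\overline{S(x)})\} \cup \{U\}$-type manipulations stay within the bookkeeping. A secondary subtlety is justifying that the $\theta$-almost Lindelöf degree \emph{with respect to closed sets} is exactly the right hypothesis: one applies it to the closed set $C = \overline{Y} \setminus U$, and must make sure $Y$ was built densely enough that controlling $C$ suffices to control all of $X \setminus U$. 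Once the network is produced, passing from $nw(X) \le Upsw(X)^{\theta\hbox{-}aL_c(X)}$ to the cardinality bound $|X| \le Upsw(X)^{\theta\hbox{-}aL_c(X)\psi(X)}$ promised in the abstract is then a routine application of $|X| \le nw(X)^{\psi(X)}$ (valid for $T_1$ spaces), which I would state as a corollary.
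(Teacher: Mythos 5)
Your overall strategy (a Charlesworth-style closing-off on the separating cover, with $cl_{\theta}(\overline{\,\cdot\,})$ replacing closures and $\theta\hbox{-}aL_{c}$ replacing $L$) is the same one the paper uses, but step (3) as written has a genuine gap, sitting exactly at the point you dismiss as a ``secondary subtlety.'' You apply $\theta\hbox{-}aL_{c}(X)\leq\kappa$ only to $C=\overline{Y}\setminus U$, so the set $N=X\setminus\bigcup\{cl_{\theta}(\overline{W}):W\in\mathcal{W}\}$ is merely contained in $U\cup(X\setminus\overline{Y})$, not in $U$; and the point supplied by the closing-off property lies in $Y\cap N\subseteq Y\cap U$, which contradicts nothing and does not ``yield an element of $\mathcal{N}^{*}$ between $\{q\}$ and $U$.'' The missing ingredient is precisely the claim you label ``the crux'' and never prove: $\overline{Y}=X$. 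It does follow from your construction, by the argument the paper carries out for its set $D$: if $p\notin\overline{Y}$, then for each $x\in\overline{Y}$ pick $S(x)\in\mathcal{S}$ with $x\in S(x)$ and $p\notin cl_{\theta}(\overline{S(x)})$; each $S(x)$ is open and meets $Y$ (because $x\in\overline{Y}$), hence contains a chosen point and so lies in $\mathcal{S}^{*}$ (note your parenthetical ``after checking $H'\subseteq Y$'' is off for the same reason: the points of $H'$ need not lie in $Y$, only their $S(x)$'s must meet it); now apply $\theta\hbox{-}aL_{c}\leq\kappa$ to the closed set $\overline{Y}$ to get $\mathcal{W}\in[\mathcal{S}^{*}]^{\leq\kappa}$ whose sets $cl_{\theta}(\overline{W})$ cover $\overline{Y}$ but miss $p$; the closing-off property then puts a chosen point of $Y$ outside that union, a contradiction.

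Once density is established the rest is the paper's route and needs no contradiction argument: every member of $\mathcal{S}$ is nonempty open, hence meets the dense set, so $|\mathcal{S}|\leq\tau^{\kappa}$; then $\mathcal{N}=\bigl\{X\setminus\bigcup\{cl_{\theta}(\overline{S}):S\in\mathcal{U}\}:\mathcal{U}\in[\mathcal{S}]^{\leq\kappa}\bigr\}$ has size at most $\tau^{\kappa}$ and is verified to be a network directly: for $q\in U$ open, cover the closed set $X\setminus U$ (not $\overline{Y}\setminus U$) by sets $S(x)$ with $x\in S(x)$ and $q\notin cl_{\theta}(\overline{S(x)})$, extract a subfamily of size $\leq\kappa$ whose $cl_{\theta}(\overline{S(x)})$ cover $X\setminus U$, and take the complement. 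Your proposed $\mathcal{N}^{*}$ built from ``intersections of members'' of $\mathcal{S}^{*}$ is not the right family; only these complements of $\leq\kappa$-unions are needed. Your closing remark, that the cardinality bound $|X|\leq Upsw(X)^{\theta\hbox{-}aL_{c}(X)\psi(X)}$ then follows from $|X|\leq nw(X)^{\psi(X)}$ for $T_{1}$ spaces, is correct and is exactly how the paper concludes.
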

\proof
Let $\theta$-$aL_{c}(X)=k$ and $\mathcal{S}$ a Urysohn point separating open cover for $X$ such that for each $x\in X$, $|\mathcal{S}_{x}|\leq\lambda$, where $\mathcal{S}_{x}$ is the collection of members of $\mathcal{S}$ containing $x$.

We first show that $d(X)\leq \lambda^{k}$. $\forall$ $\alpha< k$ construct a subset $D_{\alpha}$ of $X$ such that:
\begin{enumerate}
\item $|D_{\alpha}|\leq \lambda^{k}$;
\item if $\mathcal{U}$ is a subcollection of $\bigcup\{\mathcal{S}_{x}:\;x\in \bigcup_{\beta<\alpha} D_{\beta}\}$ such that $|\mathcal{U}|\leq k$ and if $X\setminus\bigcup cl_{\theta}(\overline{\mathcal{U}})\neq\emptyset$ we have that $D_{\alpha}\setminus\bigcup cl_{\theta}(\overline{\mathcal{U}})\neq\emptyset$.
\end{enumerate}
Such a $D_{\alpha}$ can be constructed since the member of possible $\mathcal{U}'s$ at the $\alpha th$ stage of construction is $\leq(\lambda^{k} k\lambda)^{k}=\lambda^{k}$.

Let $D=\bigcup_{\alpha<k^{+}} D_{\alpha}$. We have that $|D|\leq \lambda^{k}$. We want to prove that $\overline{D}=X$. Suppose that there exists $p\in X\setminus\overline{D}$, since $Upsw(X)\leq \lambda$, $\forall x\in\overline{D}$, there exists $V_{x}\in\mathcal{S}_{x}$: $p\notin cl_{\theta}(\overline{V_{x}})$. Since $x\in\overline{D}$, $V_{x}\cap D\neq\emptyset$. Let $y\in V_{x}\cap D$, so $V_{x}\in\bigcup\{\mathcal{S}_{y}:\;y\in D\}$. Put $\mathcal{W}=\{V_{x}:\;x\in\overline{D}\}\subseteq\bigcup\{\mathcal{S}_{y}:\;y\in D\}$. $\mathcal{W}$ is an open cover of $\overline{D}$ and since $\theta$-$aL_{c}(X)\leq k$, there exists $\mathcal{W'}\subseteq\mathcal{W}$ with $|\mathcal{W'}|\leq k$ such that $\overline{D}\subseteq\bigcup\{cl_{\theta}(\overline{V}):\;V\in\mathcal{W'}\}$ and $p\notin \bigcup\{cl_{\theta}(\overline{V}):\;V\in\mathcal{W'}\}$ and this contradicts 2..

Since $d(X)\leq\lambda^{k}$ we have that $|\mathcal{S}|\leq\lambda^{k}$.

Let $\mathcal{N}=\{X\setminus S:\;S\;is\;the\;union\;of\;at\;most\;k\;members\;of\;\mathcal{S}\}$. $|\mathcal{N}|\leq\lambda^{k}$ and $\mathcal{N}$ is a network for $X$.

\endproof

\begin{theorem}
If $X$ is a Urysohn space then $|X|\leq Upsw(X)^{\theta\hbox{-}aL_{c}(X)\psi(X)}$.
\end{theorem}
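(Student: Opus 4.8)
The plan is to combine the network-weight bound $nw(X)\le Upsw(X)^{\theta\hbox{-}aL_c(X)}$ just proved with the classical estimate $|X|\le nw(X)^{\psi(X)}$, adapted to take advantage of the fact that in a Urysohn space $\psi(X)$ can be replaced by the (a priori larger, but here harmless) closed/$\theta$-pseudocharacter in the relevant step. Concretely, set $k=\theta\hbox{-}aL_c(X)$ and $\lambda=\psi(X)$, and let $\mathcal{S}$ be a Urysohn point separating open cover witnessing $Upsw(X)$, so that each point lies in at most $Upsw(X)$ members of $\mathcal{S}$. From the previous theorem we already have a network $\mathcal{N}$ for $X$ with $|\mathcal{N}|\le Upsw(X)^{k}$; write $\mu=Upsw(X)^{k}$. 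The goal is to show $|X|\le \mu^{\lambda}=Upsw(X)^{k\lambda}$.

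First I would do a closing-off construction of length $\lambda^{+}$. Build an increasing chain $\{Y_\alpha:\alpha<\lambda^{+}\}$ of subsets of $X$ together with, for each $x$ already caught, a fixed pseudobase $\{G_\alpha(x):\alpha<\lambda\}$ of open sets with $\bigcap_{\alpha<\lambda}G_\alpha(x)=\{x\}$ (using $\psi(X)\le\lambda$), and for each such $x$ and each $\alpha$ a fixed member $N(x,\alpha)\in\mathcal{N}$ with $x\in N(x,\alpha)\subseteq G_\alpha(x)$. At stage $\alpha$, for every countable-or-smaller — more precisely $\le\lambda$-sized — subcollection $\mathcal{M}$ of the network elements so far assigned whose union is not all of $X$, throw a point of $X\setminus\bigcup\mathcal{M}$ into $Y_\alpha$; since there are at most $\mu^{\lambda}$ such $\mathcal{M}$ and $|Y_{<\alpha}|\le\mu^{\lambda}$, one keeps $|Y_\alpha|\le\mu^{\lambda}$ at every stage. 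Let $Y=\bigcup_{\alpha<\lambda^{+}}Y_\alpha$; then $|Y|\le\mu^{\lambda}=Upsw(X)^{k\lambda}$.

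The verification that $Y=X$ is the heart of the argument. Suppose $q\in X\setminus Y$. For each $x\in Y$, because $\bigcap_{\alpha<\lambda}G_\alpha(x)=\{x\}$ and $q\ne x$, there is $\alpha(x)<\lambda$ with $q\notin G_{\alpha(x)}(x)$, hence $q\notin N(x,\alpha(x))$; set $N_x=N(x,\alpha(x))\in\mathcal{N}$. The family $\{N_x:x\in Y\}$ covers $Y$ by network elements, each omitting $q$. Here I would use that the members of $\mathcal{N}$ are complements of unions of $\le k$ members of $\mathcal{S}$, i.e. each $N_x=X\setminus\bigcup\mathcal{S}_x'$ with $\mathcal{S}_x'\in[\mathcal{S}]^{\le k}$; equivalently, $q\in\bigcup\mathcal{S}_x'$, so one may pick $S_x\in\mathcal{S}$ with $q\in S_x$ and $x\notin cl_\theta(\overline{S_x})$ — but there are only $\le Upsw(X)$ members of $\mathcal{S}$ through $q$, so in fact the whole cover $\{N_x\}$ refines into $\le Upsw(X)\le\mu$ network elements, actually into a subfamily indexed by the $\le Upsw(X)$-many $\mathcal{S}_x'$ one can form from $\mathcal{S}_q$. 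Now the cruder and safer route: since $q\notin\overline{Y}$ would contradict nothing yet, instead apply $\theta\hbox{-}aL_c(X)\le k$ to the closed set $\overline{Y}$ and the open cover $\{S_x:x\in\overline{Y}\}$ obtained as in the proof of the previous theorem, getting $\mathcal{W}'\in[\{S_x\}]^{\le k}$ with $\overline{Y}\subseteq\bigcup\{cl_\theta(\overline{S}):S\in\mathcal{W}'\}$ and $q\notin\bigcup\{cl_\theta(\overline{S}):S\in\mathcal{W}'\}$; the corresponding $\le k$ network elements $\{X\setminus\bigcup\mathcal{W}'\}$ — one element — covers $Y$ and misses $q$, and all the finitely-boundedly-many $\mathcal{S}$-sets involved were assigned to some $Y_\alpha$, so the point thrown in at stage $\alpha+1$ for this $\mathcal{M}$ lies in $Y\setminus\bigcup\mathcal{M}$, contradiction. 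Hence $X=Y$ and $|X|\le Upsw(X)^{k\lambda}$.

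The main obstacle is bookkeeping in the limit stage: one must make sure that every relevant small subcollection of network-or-$\mathcal{S}$-elements that can be built from $\overline{Y}$ is already built from some $Y_\alpha$ with $\alpha<\lambda^{+}$, which requires $cf(\lambda^{+})>k$ (true, since $k\le\lambda$) together with $t_\theta$ or ordinary closure interacting correctly with taking $\overline{Y}$; here one uses that $\overline{Y}=\bigcup_\alpha\overline{Y_\alpha}$ need not hold, so one should instead work with $Y$ directly and replace "$\overline{Y}$" by "$Y$" throughout, covering $Y$ (not its closure) by the $S_x$ and invoking $\theta\hbox{-}aL(Y,X)\le\theta\hbox{-}aL_c(X)$ via the hereditarity on closed sets — or simply note $aL$-type degrees of $Y$ are controlled because $Y$ is dense and the cover extends over $X$. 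Modulo this standard care, the argument is the Urysohn-$\theta$ analogue of the classical $|X|\le nw(X)^{\psi(X)}$ proof, and follows step by step the proof of Theorem 20 in \cite{BCCS}.
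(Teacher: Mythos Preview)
Your opening sentence \emph{is} the paper's proof: the authors simply quote the classical $T_1$ inequality $|X|\le nw(X)^{\psi(X)}$ and chain it with the previous theorem $nw(X)\le Upsw(X)^{\theta\text{-}aL_c(X)}$, obtaining the result in one line. No closing-off is needed.

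Everything after your first sentence is superfluous, and the superfluous part is where the gaps live. The bound $|X|\le nw(X)^{\psi(X)}$ for a $T_1$ space is elementary and does not require a transfinite construction: fix a network $\mathcal{N}$, for each $x\in X$ choose open sets $G_\alpha(x)$, $\alpha<\psi(X)$, with $\{x\}=\bigcap_\alpha G_\alpha(x)$, pick $N_\alpha(x)\in\mathcal{N}$ with $x\in N_\alpha(x)\subseteq G_\alpha(x)$, and observe that $x\mapsto (N_\alpha(x))_{\alpha<\psi(X)}$ is injective into $\mathcal{N}^{\psi(X)}$. Your closing-off argument, by contrast, tries to re-enter the machinery of the previous theorem (covering $\overline{Y}$ by members of $\mathcal{S}$ and invoking $\theta\text{-}aL_c$), and the obstacles you yourself flag are genuine: you need $\theta\text{-}aL(Y,X)$ or $\theta\text{-}aL(\overline{Y},X)$ to be small, but $Y$ need not be closed and you have no control over $\overline{Y}\setminus Y$ in the bookkeeping; the attempted fix ``work with $Y$ directly'' does not match the hypothesis $\theta\text{-}aL_c$, and the density remark at the end is hand-waving. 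Drop all of this and keep only your first sentence together with the one-line injection above.
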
 
\proof
If $X$ is a $T_{1}$ space then $|X|\leq nw(X)^{\psi(X)}$ and using the theorem above we have that $|X|\leq nw(X)^{\psi(X)}\leq Upsw(X)^{\theta\hbox{-}aL_{c}(X)\psi (X)}$.
\endproof

\section*{Acknowledgement}
The authors are very grateful to J. Porter for suggesting Example \ref{E}.

\end{document}